\DeclareMathOperator{\tr}{tr}
\begin{document}
\input amssym.def
\setcounter{equation}{0}
\newcommand{\wt}{{\rm wt}}
\newcommand{\spa}{\mbox{span}}
\newcommand{\Res}{\mbox{Res}}
\newcommand{\End}{\mbox{End}}
\newcommand{\Ind}{\mbox{Ind}}
\newcommand{\Hom}{\mbox{Hom}}
\newcommand{\Mod}{\mbox{Mod}}
\newcommand{\m}{\mbox{mod}\ }
\renewcommand{\theequation}{\thesection.\arabic{equation}}
\numberwithin{equation}{section}

\def \End{{\rm End}}
\def \Aut{{\rm Aut}}
\def \Z{\mathbb Z}
\def \H{\mathbb H}
\def \MM{\Bbb M}
\def \C{\mathbb C}
\def \R{\mathbb R}
\def \Q{\mathbb Q}
\def \N{\mathbb N}
\def \ann{{\rm Ann}}
\def \<{\langle}
\def \o{\omega}
\def \O{\Omega}
\def \Or{\cal O}
\def \M{{\cal M}}
\def \1t{\frac{1}{T}}
\def \>{\rangle}
\def \t{\tau }
\def\W{\cal W}
\def \a{\alpha }
\def \e{\epsilon }
\def \l{\lambda }
\def \L{\Lambda }
\def \g{\gamma}
\def \b{\beta }
\def \om{\omega }
\def \o{\omega }
\def \ot{\otimes}
\def \cg{\chi_g}
\def \ag{\alpha_g}
\def \ah{\alpha_h}
\def \ph{\psi_h}
\def \S{\cal S}
\def \nor{\vartriangleleft}
\def \V{V^{\natural}}
\def \voa{vertex operator algebra\ }
\def \voas{vertex operator algebras}
\def \v{vertex operator algebra\ }
\def \1{{\bf 1}}
\def \be{\begin{equation}\label}
\def \ee{\end{equation}}
\def \qed{\mbox{ $\square$}}
\def \pf {\noindent {\bf Proof:} \,}
\def \bl{\begin{lem}\label}
\def \el{\end{lem}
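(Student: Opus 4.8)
To prove the asserted properties of the quantum dimension---in particular $\qdim_V M\ge 1$ for every irreducible $V$-module $M$, with equality exactly when $M$ is a simple current---I would build everything on a closed formula for $\qdim_V M$ in terms of modular data. First, under the standing hypotheses on $V$ (rational, $C_2$-cofinite, of CFT type, self-dual), modular invariance of the $q$-characters in the form of Zhu and Dong--Li--Mason says that the normalized characters $\ch_M(\tau)$ (with $q=e^{2\pi i\tau}$), as $M$ ranges over $\Irr(V)$, form a finite vector transformed under $\tau\mapsto -1/\tau$ by an invertible matrix $S=(S_{M,N})$. Since $V$ is of CFT type and $C_2$-cofinite, $\ch_V(\tau)$ has nonnegative Fourier coefficients and its growth along the imaginary axis as $\tau\to 0^{+}$ is governed by the single leading exponent $-c/24$; comparing this with the $\tau\to 0^{+}$ asymptotics of $\ch_M(\tau)$ gives
\[
\qdim_V M=\lim_{\tau\to 0^{+}}\frac{\ch_M(\tau)}{\ch_V(\tau)}=\frac{S_{M,V}}{S_{V,V}},
\]
an algebraic number with $\qdim_V V=1$.

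The crux---and what I expect to be the \textbf{main obstacle}---is positivity: one must show $S_{V,V}>0$ and $S_{M,V}>0$ for every irreducible $M$. Half of this is elementary: along the imaginary axis $\ch_M(\tau)$ and $\ch_V(\tau)$ are positive reals, so the limit above is $\ge 0$, which forces each $S_{M,V}$ to have the same sign as $S_{V,V}$ (and we normalize that sign to be $+$). The substantive input is that $S_{V,V}>0$ and that no $S_{M,V}$ vanishes, for which I would invoke Huang's theorems: the module category of $V$ is a modular tensor category, so the fusion matrices $N(M)$ with entries $N_{M,N}^{W}=\dim I\binom{W}{M\,N}$ are simultaneously diagonalized by $S$, and the matrix $\sum_{M}N(M)$ is symmetric, entrywise nonnegative, and irreducible (its ``fusion graph'' is connected since $V$ is simple); its Perron--Frobenius eigenvector is then, up to scaling, exactly the $V$-indexed column of $S$, so $S_{M,V}>0$ for all $M$ and hence $\qdim_V W=S_{W,V}/S_{V,V}>0$ for every irreducible $W$. (As a byproduct, $\glob(V):=\sum_{M\in\Irr(V)}(\qdim_V M)^2=S_{V,V}^{-2}$.) One must also check in the first step that no subdominant character interferes with the $\tau\to 0^{+}$ comparison; this uses CFT type together with the positivity of conformal weights that follows from $C_2$-cofiniteness.

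With positivity established, the rest is formal. The Verlinde formula makes $M\mapsto \qdim_V M=S_{M,V}/S_{V,V}$ a one-dimensional representation of the fusion ring, so $\qdim$ is multiplicative under $\boxtimes$ (and additive over direct sums); moreover $S_{M',V}=\overline{S_{M,V}}=S_{M,V}$ (it is real), so $\qdim_V M'=\qdim_V M$ for the contragredient $M'$. Since there is a nonzero intertwining operator of type $\binom{V}{M\,M'}$ and in fact $N_{M,M'}^{V}=1$, we get
\[
(\qdim_V M)^2=(\qdim_V M)(\qdim_V M')=\sum_{W}N_{M,M'}^{W}\,\qdim_V W\;\ge\;N_{M,M'}^{V}\,\qdim_V V=1,
\]
so $\qdim_V M\ge 1$. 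If equality holds, then every term with $W\ne V$ vanishes, and since each $\qdim_V W>0$ this forces $N_{M,M'}^{W}=0$ for $W\ne V$, i.e.\ $M\boxtimes M'\cong V$, so $M$ is a simple current. Conversely, a simple current has finite order $n$ in the (finite) group of invertible modules, so $(\qdim_V M)^n=\qdim_V(M^{\boxtimes n})=\qdim_V V=1$ with $\qdim_V M>0$, forcing $\qdim_V M=1$. The same outline goes through verbatim in the $g$-twisted (orbifold) setting, with the $q$-characters replaced by the trace functions of Dong--Li--Mason and $S$ by the corresponding $S$-matrix on the $g$-twisted sectors.
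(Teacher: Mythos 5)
Your proposal does not address the statement at hand. The lemma to be proved asserts that for a commuting pair $(g,h)\in P(G)$ there is a positive integer $m_{g,h}$ such that each twisted trace function $Z_M(v,(g,h),\tau)$, for $M\in \mathscr{M}(g,h)$ and $v\in V_{[n]}$, is a modular form of weight $n$ on the congruence subgroup $\Gamma(m_{g,h})$. What you have written is a self-contained argument about quantum dimensions --- the formula $\qdim_V M=S_{M,V}/S_{V,V}$, Perron--Frobenius positivity of the $V$-column of the $S$-matrix, the inequality $\qdim_V M\ge 1$, and the characterization of simple currents. None of this appears in, or is needed for, the lemma; quantum dimensions, the Verlinde formula, and the fusion ring are not used anywhere in this paper. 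So as a proof of the stated lemma the proposal is simply off target: it never produces a level $m_{g,h}$, never invokes a congruence-subgroup result, and never analyzes the $(g,h)$-twisted trace function.

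For comparison, the paper's actual argument runs as follows. Let $H=\langle g,h\rangle$, a finite abelian group, so that $V^H$ is regular by the Miyamoto/Carnahan--Miyamoto theorem, and one checks $V^H$ is selfdual using the quantum Galois decomposition $V=\oplus_{\mu\in\hat H}V^{\mu}$ together with the selfduality of $V$. The congruence property for the untwisted theory of $V^H$ (Dong--Lin--Ng) then supplies the integer $m_{g,h}$ such that every $Z_W(v,\tau)$, $W$ an irreducible $V^H$-module, is modular on $\Gamma(m_{g,h})$. Finally, since $\varphi(h)$ commutes with $Y_M(v,z)$ for $v\in V^H$, one decomposes $M$ into $\varphi(h)$-eigenspaces $M_j$, each a finite direct sum of irreducible $V^H$-modules, and writes $Z_M(v,(g,h),\tau)=\sum_j e^{2\pi ij/K}\tr_{M_j}o(v)q^{L(0)-c/24}$ as a finite linear combination of untwisted $V^H$-trace functions. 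If you want to salvage your write-up, it would have to be resubmitted as a proof of a different statement (one actually about quantum dimensions); for this lemma you need the reduction to the fixed-point subalgebra $V^H$ and the eigenspace decomposition under $\varphi(h)$, neither of which your argument contains.
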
}
\def \ba{\begin{array}}
\def \ea{\end{array}}
\def \bt{\begin{thm}\label}
\def \et{\end{thm}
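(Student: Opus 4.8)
The statement has the shape of a ``modular data plus Galois'' result, so my plan is to deduce it from Zhu's modular-invariance theorem together with the arithmetic of the resulting $SL_2(\Z)$-representation. I would first fix notation: for $M\in\Irr(V)$ write $\ch_M(\t)=\tr_M q^{L(0)-c/24}$ (and, in the twisted setting, the graded traces $\cg$, resp.\ $\cg$ decorated by an eigenvalue $\ph$ of a commuting automorphism), and recall that under the standing rationality and $C_2$-cofiniteness hypotheses the set $\Irr(V)$ is finite, the central charge $c$ and the conformal weights $h_M$ are rational, and each normalized character $q^{c/24}\ch_M$ has a $q$-expansion with non-negative integer coefficients, namely $\dim M_n$. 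Zhu's theorem (and its twisted refinement, which I would invoke from the literature when needed) then gives that the $\C$-span $\mathcal{F}$ of the $\ch_M$ is a finite-dimensional $SL_2(\Z)$-module with $\ch_M(-1/\t)=\sum_N S_{MN}\,\ch_N(\t)$ and $T$ acting by the scalars $e^{2\pi i(h_M-c/24)}$; by Huang's theorem the $V$-modules form a modular tensor category, so $S=(S_{MN})$ is the usual symmetric nondegenerate $S$-matrix, $\qdim M=S_{0M}/S_{00}$, and $\glob V=\sum_M(\qdim M)^2=S_{00}^{-2}$.

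\textbf{Step 1 (cyclotomicity).} Because the chosen basis of $\mathcal{F}$ has integral Fourier coefficients after normalization and spans an $SL_2(\Z)$-module of modular functions, the kernel of $\rho\colon SL_2(\Z)\to GL(\mathcal{F})$ is a congruence subgroup $\Gamma(N)$. I would cite this congruence-subgroup property for rational $C_2$-cofinite VOAs; if it must be internal, one reproves it through the vector-valued-modular-forms machinery (nondegeneracy of $S$, rationality of the exponents $h_M-c/24$, and a $\Gal$-stable integral lattice in $\mathcal{F}$). It follows that every entry of $S$, every $T$-eigenvalue, and hence every $\qdim M$ and $\glob V$, lies in the cyclotomic field $\Q(\zeta_N)$.

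\textbf{Steps 2--3 (Galois action and the Monster case).} For $\sigma\in\Gal(\Q(\zeta_N)/\Q)$ the general Galois symmetry of a modular $S$-matrix yields a permutation $\hat\sigma$ of $\Irr(V)$ together with signs $\e_\sigma(M)=\pm1$ satisfying $\sigma(S_{MN})=\e_\sigma(M)\,S_{\hat\sigma M,\,N}=\e_\sigma(N)\,S_{M,\,\hat\sigma N}$; equivalently, applying $\sigma$ to the normalized Fourier coefficients of $\ch_M$ produces $\ch_{M^\sigma}$ with $M^\sigma=\hat\sigma^{-1}M$, which is the action asserted in the theorem (one checks it is a group action and is functorial). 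Evaluating at the vacuum, $\sigma(S_{0M})=\e_\sigma(M)\,S_{0,\hat\sigma M}$ and $S_{0M}>0$ give $\sigma(\qdim M)=\e_\sigma(M)\,\qdim M^\sigma$, whence, summing over $M$, $\sigma(\glob V)=\glob V$, so $\glob V\in\Q$; in the totally positive case (unitary, or a holomorphic-orbifold module category) the signs drop out and $\sigma(\qdim M)=\qdim M^\sigma$. For the Monster one applies this with $V=\V$ (holomorphic, $\Irr\V=\{\V\}$) and with the fixed-point subalgebra $\V^{\<g\>}$, whose simple modules are indexed by the $\<g\>$-twisted sectors and carry an action of $C_{\MM}(g)$; the Galois action on their normalized characters then matches the arithmetic action on the McKay--Thompson and generalized-Moonshine series $\cg$ (decorated by $\ph$), which is the sought compatibility and field-of-rationality statement.

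The main obstacle is Step 1: bridging ``integral Fourier coefficients of a finite $SL_2(\Z)$-module of modular functions'' and ``$\ker\rho$ is a congruence subgroup, hence all modular data is cyclotomic.'' If the congruence property may be used as a black box this is immediate; otherwise one needs the full vector-valued-modular-forms argument, and must separately record that $c$ and the $h_M$ are rational. A secondary subtlety is the bookkeeping of the signs $\e_\sigma$ in the Galois symmetry of $S$ and the identification of when they are trivial for the VOAs in question.
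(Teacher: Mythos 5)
There is a genuine mismatch here: the theorem you are asked to prove is the congruence property for the kernel of the representation $\rho$ of $SL(2,\Z)$ on the span of the \emph{twisted} trace functions $Z_M(v,(g,h),\tau)$, taken over all commuting pairs $(g,h)\in P(G)$ and all $h$-stable irreducible $g$-twisted modules $M$. Your proposal never engages with these twisted sectors. Step 1 invokes, as a black box, the congruence-subgroup property for the untwisted characters of a regular selfdual VOA --- which is exactly the Dong--Lin--Ng input the paper also cites --- but then Steps 2--3 veer into Galois symmetry of the $S$-matrix, quantum dimensions and global dimensions, none of which is the content of this theorem and none of which produces a congruence subgroup in the kernel of $\rho$ acting on the orbifold conformal block. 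In effect you have sketched a proof of a corollary of the $G=\{1\}$ congruence property rather than a proof of the orbifold statement.

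The missing idea, which is the entire point of the paper's argument, is a reduction of each twisted trace function to untwisted trace functions of a fixed-point subalgebra. For a commuting pair $(g,h)$ one sets $H=\langle g,h\rangle$, which is abelian, so that the Miyamoto and Carnahan--Miyamoto regularity theorem applies to give $V^H$ regular; one checks $V^H$ is selfdual using the quantum Galois decomposition $V=\oplus_{\mu\in\hat H}V^\mu$ with $(V^\mu)'\cong V^{\mu^{-1}}$; one then decomposes $M$ into eigenspaces $M_j$ of $\varphi(h)$, each of which is a direct sum of irreducible $V^H$-modules because $\varphi(h)$ commutes with $Y_M(v,z)$ for $v\in V^H$, and writes
$$Z_M(v,(g,h),\tau)=\sum_{j=0}^{K-1}e^{2\pi ij/K}\,\tr_{M_j}o(v)q^{L(0)-c/24},$$
a finite linear combination of untwisted $V^H$-trace functions. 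Only at this point does the cited untwisted congruence property apply, yielding a level $m_{g,h}$ for each pair; taking the least common multiple over the finitely many pairs in $P(G)$ gives $\Gamma(m)\subseteq\ker\rho$. Without this reduction your argument cannot say anything about $Z_M(v,(g,h),\tau)$ for $(g,h)\neq(1,1)$, so the proof as proposed does not establish the theorem. (Your secondary remarks about rationality of $c$ and the conformal weights are correct but already recorded in the paper's preliminaries; the sign bookkeeping for $\epsilon_\sigma$ is not needed anywhere in this proof.)
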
}
\def \br{\begin{rem}\label}
\def \er{\end{rem}}
\def \ed{\end{de}}
\def \bp{\begin{prop}\label}
\def \ep{\end{prop}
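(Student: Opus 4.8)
The plan is to express the quantum dimension in terms of the modular data of $V$ and then read the arithmetic statements off the Verlinde formula and Perron--Frobenius theory. First, since $V$ is rational and $C_2$-cofinite, Zhu's theorem (in the $g$-twisted form of Dong--Li--Mason when $M$ is twisted) furnishes a finite-dimensional space of (twisted) trace functions closed under $SL_2(\mathbb{Z})$, spanned by the graded characters $\ch_q N$ of the finitely many irreducible (twisted) modules $N$, with $\tau\mapsto -1/\tau$ realized by an invertible matrix $S=(S_{N,N'})$; $C_2$-cofiniteness also forces each $\ch_q N$ to satisfy a modular linear differential equation, so its behaviour as $q\to 1^{-}$ along the real axis is controlled. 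A standard asymptotic analysis at the cusp then yields $\qdim_V M=\lim_{q\to 1^{-}}\ch_q M/\ch_q V=S_{M,V}/S_{V,V}$: the limit exists, is path-independent, and is a positive real number (each $\ch_q N$ has non-negative coefficients and is therefore positive on $(0,1)$, while positivity of the limit uses that $V$ has dominant growth, equivalently $S_{V,N}>0$).

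Next I invoke the Verlinde formula: the fusion matrices $N_M=(N_{M,N}^{N'})_{N,N'}$ are simultaneously diagonalized by the columns of $S$, the eigenvalue of $N_M$ on the column indexed by $N$ being $S_{M,N}/S_{V,N}$. In particular $\qdim_V M=S_{M,V}/S_{V,V}$ is an eigenvalue of the non-negative integer matrix $N_M$, hence an algebraic integer whose minimal polynomial over $\mathbb{Q}$ divides the integral characteristic polynomial of $N_M$; consequently every $\Gal(\overline{\mathbb{Q}}/\mathbb{Q})$-conjugate of $\qdim_V M$ is again an eigenvalue of $N_M$. The same diagonalization, together with $\qdim_V V=1$, gives multiplicativity $\qdim_V(M\boxtimes_V M')=\qdim_V(M)\,\qdim_V(M')$ and additivity over direct sums.

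Now Perron--Frobenius completes the argument. The column of $S$ indexed by the vacuum $V$ has strictly positive entries $S_{N,V}$ and is a common eigenvector of all the non-negative matrices $N_M$ with eigenvalue $\qdim_V M>0$, so $\qdim_V M$ is the Perron--Frobenius eigenvalue, i.e. the spectral radius, of $N_M$. Hence $\qdim_V M\ge|\lambda|$ for every eigenvalue $\lambda$ of $N_M$, and in particular $\qdim_V M\ge|\sigma(\qdim_V M)|$ for every Galois automorphism $\sigma$, so $\qdim_V M$ is the largest of its conjugates in absolute value. Moreover a non-negative integer matrix of spectral radius $<1$ is nilpotent, so $N_M$ (being non-nilpotent, as its spectral radius $\qdim_V M>0$) has spectral radius $\ge 1$; thus $\qdim_V M\ge 1$, with equality precisely when $N_M$ is a permutation matrix, i.e.\ $M\boxtimes_V M^{\ast}=V$, i.e.\ $M$ is a simple current.

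I expect the main obstacle to be supplying the modular and Verlinde input in the twisted setting with the correct normalization. Concretely, one must know that the $g$-twisted $h$-stable trace functions, carrying the fractional $q^{1/T}$-grading ($T=|g|$) and the $\alpha_g$-projective action of the centralizer $C_{\Aut(V)}(g)$, still assemble into a genuine $SL_2(\mathbb{Z})$-module whose $S$-matrix satisfies the hypotheses of the Verlinde formula; this is where the cocycle $\alpha_g$ and the normalizing scalars $\chi_g,\psi_h$ must be tracked so that nothing survives only projectively. A secondary technical point is the cusp asymptotics $\lim_{q\to 1^{-}}\ch_q M/\ch_q V=S_{M,V}/S_{V,V}$, which needs the differential-equation control from $C_2$-cofiniteness together with the positivity $S_{V,N}>0$, and one should check that in the twisted case $V$ is indeed the object whose character dominates, i.e.\ has the smallest effective leading exponent.
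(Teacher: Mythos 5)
Your proposal does not address the statement in question. The only proposition in this paper is the one in Section 5 on holomorphic orbifolds, which asserts four properties of the twisted trace functions $Z(v,(g,h),\tau)=Z_{V(g)}(v,(g,h),\tau)$ attached to a $C_2$-cofinite holomorphic vertex operator algebra $V$ of CFT type and a finite automorphism group $G$: (1) invariance under simultaneous conjugation of the commuting pair $(g,h)$; (2) the $SL(2,\Z)$ transformation law $Z(v,(g,h),\gamma\tau)=(c\tau+d)^{{\rm wt}[v]}\gamma_{(g,h),(g^ah^c,g^bh^d)}Z(v,(g^ah^c,g^bh^d),\tau)$ with a single scalar (no sum, because holomorphicity forces each $\mathscr{M}(g)$ to be a singleton); (3) modularity of each $Z(v,(g,h),\tau)$ on the fixed congruence subgroup $\Gamma(m)$; and (4) that the $q$-coefficients of $Z(g,h,\tau)$ are projective characters of $C_G(g)$. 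The intended proof is short: (3) is a direct specialization of the main theorem of the paper (the kernel of $\rho$ contains $\Gamma(m)$, where $m$ is the least common multiple of the integers $m_{g,h}$ produced by the key lemma via passage to the abelian subgroup $H=\langle g,h\rangle$ and the congruence property of $V^H$), while (1), (2), (4) are quoted from Dong--Li--Mason's modular invariance paper.

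What you have written instead is an argument about quantum dimensions: the identity $\qdim_V M=S_{M,V}/S_{V,V}$, the Verlinde formula, Perron--Frobenius theory, and the conclusion that quantum dimensions are algebraic integers at least $1$ with equality characterizing simple currents. None of this appears in the paper; quantum dimensions, fusion rules, and the Verlinde formula are never invoked, and the Verlinde formula is in any case not available at the level of generality assumed here (regular, selfdual, CFT type, with an arbitrary finite automorphism group). So even taken on its own terms your argument cannot substitute for the paper's proof: it establishes properties of a different invariant and says nothing about the conjugation covariance, the one-dimensional $S$-transformation of the holomorphic twisted sectors, the congruence subgroup $\Gamma(m)$, or the projective $C_G(g)$-character structure of the coefficients. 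You should restart from the statement of the proposition in Section 5 and reduce it to Theorem 4.1 together with the results of [DLM4], which is all that is required.
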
}
\def \p{\phi}
\def \d{\delta}
\def \irr{\rm irr}

\newtheorem{th1}{Theorem}
\newtheorem{ree}[th1]{Remark}
\newtheorem{thm}{Theorem}[section]
\newtheorem{prop}[thm]{Proposition}
\newtheorem{coro}[thm]{Corollary}
\newtheorem{lem}[thm]{Lemma}
\newtheorem{rem}[thm]{Remark}
\newtheorem{de}[thm]{Definition}
\newtheorem{hy}[thm]{Hypothesis}
\newtheorem{conj}[thm]{Conjecture}
\newtheorem{ex}[thm]{Example}

\begin{center}
{\Large {\bf Congruence Property in Orbifold Theory}}\\
\vspace{0.5cm}

Chongying Dong\footnote
{Supported by a NSF grant DMS-1404741 and China NSF grant 11371261}
\\
 Department of Mathematics, University of
California, Santa Cruz, CA 95064 USA \\
Li Ren\footnote{Supported by China NSF grant 11301356}\\
 School of Mathematics,  Sichuan University,
Chengdu 610064 China
\end{center}

\begin{abstract} Let $V$ be a rational, selfdual,  $C_2$-cofinite vertex operator algebra of CFT type,  and $G$ a finite automorphism group of $V.$  It is proved  that the kernel of the representation of the modular group on  twisted conformal blocks associated to $V$ and $G$  is a congruence subgroup. In particular, the $q$-character of each irreducible twisted module is a modular function on the same congruence subgroup. In the case $V$ is the Frenkel-Lepowsky-Meurman's moonshine vertex operator algebra 
and $G$ is the monster simple group, the generalized McKay-Thompson series associated to any commuting pair in the monster group is a modular function.
\end{abstract}

\section{Introduction}

The modular invariance of trace functions in orbifold theory was established in \cite{DLM4}.  That is, the twisted conformal blocks
associated to a rational, $C_2$-cofinite vertex operator algebra $V$ and a finite automorphism group $G$  afford  a representation of the modular group $SL(2,\Z).$ We prove in this paper that the kernel of this representation is a congruence subgroup of
$SL(2,\Z).$

The modular invariance of trace functions associated to any rational and $C_2$-cofinite vertex operator algebra was established 
in \cite{Z} although the modular invariance of the characters of the  rational conformal field theory  \cite{C}  in physics,  
and the characters  of the  minimal representations of the Virasoro algebra \cite{R}  and  integrable highest weight representations for the affine Kac-Moody algebras \cite{KP}  in mathematics was known much earlier.  It was conjectured by many people that the kernel  of  the representation of $SL(2,\Z)$  in \cite{Z} is a congruence subgroup. This conjecture was  recently settled down in \cite{DLN}. 

According to a well known conjecture in orbifold theory, $V^G$ is rational for any rational vertex operator algebra $V$ and any finite
automorphism group $G.$ It is easy to show that the trace functions defined in \cite{DLM4} are vectors in the twisted  conformal blocks 
associated vertex operator algebra $V^G.$ So it is natural to expect that the kernel of the representation of $SL(2,\Z)$ on the twisted  conformal blocks associated to $V$ and $G$ is a congruence subgroup. Unfortunately, the rationality and the $C_2$-cofiniteness of $V^G$ were proved only when $G$ is solvable in \cite{M}, \cite{CM}.  But this result for solvable group is sufficient for the congruence subgroup property in general orbifold theory.   The key observation  is that a trace function in the orbifold theory
only involves with a commuting pairing $(g,h)$ in $G.$ Replacing $G$ by the subgroup $H$ generated by $g,h,$ we see
that $V^H$ is rational and $C_2$-cofinite. We can then apply the congruence subgroup property result in \cite{DLN} to prove 
any trace function in the twisted conformal blocks of  orbifold theory is modular on a congruence subgroup. In particular, the $q$-character $\chi_{V^G}(\tau)$ of $V^G$ is a modular function. This leads us to a conjecture which characterizes the rational  vertex operator algebra by the modularity of  its $q$-character.  A successful proof of this conjecture would imply that $V^G$ is rational
for any finite automorphism group $G.$

One important application is the modularity of the generalized McKay-Thompson series $Z(g,h,\tau)$ associated to the moonshine vertex operator algebra $V^{\natural}$  and its full automorphism group - the monster simple group $\MM$ \cite{FLM}, \cite{G}. 
Norton's generalized moonshine conjecture \cite{N}  says that for any commuting paring $(g,h)$ in $\MM$ there is a genus zero modular
function $Z(g,h,\tau)$ such that (1) The coefficient of  each power of  $q$ in  $Z(g,h,\tau)$ is a projective character of 
$C_{\MM}(g),$ (2) $Z(g,h,\tau)$ is invariant under the conjugation, (3) For any $ \gamma=\left(\begin{array}{cc}a & b\\ c & d\end{array}\right)$ in $SL(2,\Z),$ there are nonzero constants $\gamma_{(g,h), (g^ah^c,g^bh^d)}$ such that 
$$Z(g,h,\gamma\tau)= \gamma_{(g,h),(g^ah^c,g^bh^d)} Z(g^ah^c, g^bh^d, \tau),$$
(4) $Z(1,g,\tau)$ is the original Mckay-Thompson series \cite{CN}. 
It was pointed out in \cite{DLM4} that the trace   functions $Z(g,h,\tau)$ appearing in the orbifold theory associated to $V^{\natural}$ and $\MM$ satisfy conditions (1)-(4).  As a corollary of the main result in this paper, we see that each  $Z(g,h,\tau)$
is a modular function over a congruence subgroup. But the genus zero property of   $Z(g,h,\tau)$  remains open except that  the subgroup generated by $g$ and $h$ is  cyclic \cite{B}, \cite{DLM4}.

 The paper is organized as follows.  Section 2 covers the twisted modules, $g$-rationality  and related concepts following \cite{DLM2}, \cite{DLM3}. We also present  some  important  results  for $g$-rational vertex operator algebras. Section 3  is a review 
of the modular invariance of trace functions in orbifold theory from \cite{DLM4}.  We prove the main result in Section 4. That is,
the kernel of the representation of $SL(2,\Z)$ on the twisted  conformal blocks in orbifold theory is a congruence subgroup. 
In the last section we consider the special case when the vertex operator algebra is holomorphic and we also discuss how these 
results are related to the generalized moonshine conjecture \cite{N}  for the moonshine vertex operator algebra $V^{\natural}$ and
the monster group $\MM.$

\section{Basics}

In this section we recall  various notions of twisted modules for a \voa following \cite{DLM3} and  discuss some  important  concepts such as  $g$-rationality, regularity, and $C_2$-cofiniteness from \cite{Z} and \cite{DLM2}, \cite{DLM3} .  

 We first define the  $C_2$-cofiniteness \cite{Z}.
\begin{de}
We say that a \voa $V$ is $C_2$-cofinite if $V/C_2(V)$ is finite dimensional, where $C_2(V)=\langle v_{-2}u|v,u\in V\rangle.$
\end{de}

Fix   vertex operator algebra $V$ and an automorphism $g$ of $V$ of finite order $T$.  Decompose  $V$ into a direct sum of eigenspaces of $g:$
\begin{equation*}\label{g2.1}
V=\bigoplus_{r\in \Z/T\Z}V^r,
\end{equation*}
where $V^r=\{v\in V|gv=e^{-2\pi ir/T}v\}$.
We use $r$ to denote both
an integer between $0$ and $T-1$ and its residue class modulo $T$ in this
situation.

\begin{de} \label{weak}
A {\em weak $g$-twisted $V$-module} $M$ is a vector space equipped
with a linear map
\begin{equation*}
\begin{split}
Y_M: V&\to (\End\,M)[[z^{1/T},z^{-1/T}]]\\
v&\mapsto\displaystyle{ Y_M(v,z)=\sum_{n\in\frac{1}{T}\Z}v_nz^{-n-1}\ \ \ (v_n\in
\End\,M)}
\end{split}
\end{equation*}
 satisfying  the following:  for all $0\leq r\leq T-1,$ $u\in V^r$, $v\in V,$
$w\in M$,
\begin{eqnarray*}
& &Y_M(u,z)=\sum_{n\in \frac{r}{T}+\Z}u_nz^{-n-1} \label{1/2},\\
& &u_lw=0~~~
\mbox{for}~~~ l\gg 0,\label{vlw0}\\
& &Y_M({\mathbf 1},z)=Id_M,\label{vacuum}
\end{eqnarray*}
 \begin{equation*}\label{jacobi}
\begin{array}{c}
\displaystyle{z^{-1}_0\delta\left(\frac{z_1-z_2}{z_0}\right)
Y_M(u,z_1)Y_M(v,z_2)-z^{-1}_0\delta\left(\frac{z_2-z_1}{-z_0}\right)
Y_M(v,z_2)Y_M(u,z_1)}\\
\displaystyle{=z_2^{-1}\left(\frac{z_1-z_0}{z_2}\right)^{-r/T}
\delta\left(\frac{z_1-z_0}{z_2}\right)
Y_M(Y(u,z_0)v,z_2)},
\end{array}
\end{equation*}
where $\delta(z)=\sum_{n\in\Z}z^n$ and
all binomial expressions  are to be expanded in nonnegative
integral powers of the second variable.
\end{de}

\begin{de}\label{ordinary}
A $g$-{\em twisted $V$-module} is
a $\C$-graded weak $g$-twisted $V$-module $M:$
\begin{equation*}
M=\bigoplus_{\lambda \in{\C}}M_{\lambda}
\end{equation*}
where $M_{\l}=\{w\in M|L(0)w=\l w\}$ and $L(0)$ is the component operator of $Y(\omega,z)=\sum_{n\in \Z}L(n)z^{-n-2}.$ We also require that
$\dim M_{\l}$ is finite and for fixed $\l,$ $M_{\frac{n}{T}+\l}=0$
for all small enough integers $n.$ If $w\in M_{\l}$ we call $\l$  the {\em weight} of
$w$ and write $\l=\wt w.$
\end{de}

\begin{de}\label{admissible}  Let $\Z_+$  be the set of nonnegative integers.
 An {\em admissible} $g$-twisted $V$-module
is a  $\frac1T{\Z}_{+}$-graded weak $g$-twisted $V$-module $M:$
\begin{equation*}
M=\bigoplus_{n\in\frac{1}{T}\Z_+}M(n)
\end{equation*}
satisfying
\begin{equation*}
v_mM(n)\subseteq M(n+\wt v-m-1)
\end{equation*}
for homogeneous $v\in V,$ $m,n\in \frac{1}{T}{\Z}.$
\ed

In the case  $g$ is the identity map,  we have the notions of  weak, ordinary and admissible $V$-modules \cite{DLM3}.

We also need the notion of  the contragredient module. For  an admissible $g$-twisted $V$-module $M=\bigoplus_{n\in \frac{1}{T}\Z_+}M(n),$  the contragredient module $M'$
is defined as
\begin{equation*}
M'=\bigoplus_{n\in \frac{1}{T}\Z_+}M(n)^{*},
\end{equation*}
where $M(n)^*=\Hom_{\C}(M(n),\C).$ The vertex operator
$Y_{M'}(a,z)$ is defined for $a\in V$ via
\begin{eqnarray*}
\langle Y_{M'}(a,z)f,u\rangle= \langle f,Y_M(e^{zL(1)}(-z^{-2})^{L(0)}a,z^{-1})u\rangle,
\end{eqnarray*}
where $\langle f,w\rangle=f(w)$ is the natural paring $M'\times M\to \C.$
Then  $(M',Y_{M'})$ is an admissible $g^{-1}$-twisted $V$-module \cite{FHL}, \cite{X}. 
The contragredient module $M'$ for a $g$-twisted $V$-module $M$ can also be defined in the same fashion.  In this case,
$M'$ is a $g^{-1}$-twisted $V$-module. Moreover, $M$ is irreducible if and only if $M'$ is irreducible.
$V$ is called selfdual if $V$ and $V'$ are isomorphic $V$-modules.

\begin{de}
A \voa $V$ is called $g$-rational, if the  admissible $g$-twisted module category is semisimple. $V$ is called rational if $V$ is $1$-rational where $1$ is the identity map on $V.$
\end{de}

The following results from \cite{DLM3}  tell us why rationality is important.
\begin{thm}\label{grational}
Assume that $V$ is $g$-rational.  Then

(1) Any irreducible admissible $g$-twisted $V$-module $M$ is a $g$-twisted $V$-module. Moreover, there exists a number $\l \in \mathbb{C}$ such that  $M=\oplus_{n\in \frac{1}{T}\mathbb{Z_+}}M_{\l +n}$ where $M_{\lambda}\neq 0.$ The $\l$ is called the conformal weight of $M.$

(2) There are only finitely many inequivalent  irreducible admissible  $g$-twisted $V$-modules.

(3) If $V$ is also $C_2$-cofinite and $g^i$-rational for all $i\geq 0$ then the central charge $c$ and the conformal weight $\l$ of any irreducible $g$-twisted $V$-module $M$ are rational numbers.
\end{thm}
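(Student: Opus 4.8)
The plan is to deduce parts (1) and (2) from the theory of the $g$-twisted Zhu algebra $A_g(V)$ of \cite{DLM3}, and to obtain part (3) from the modular invariance of trace functions. For (1) and (2), recall that the assignments $M\mapsto M(0)$ and $U\mapsto\bar M(U)$ (the generalized Verma module built on an $A_g(V)$-module $U$) induce a bijection between the isomorphism classes of irreducible admissible $g$-twisted $V$-modules and those of irreducible $A_g(V)$-modules, and that $g$-rationality forces $A_g(V)$ to be a finite-dimensional semisimple associative algebra (\cite{DLM3}). Since such an algebra has only finitely many irreducible modules, (2) is immediate. For (1), let $M$ be an irreducible admissible $g$-twisted module; after shifting the grading we may assume $M(0)\neq 0$, so $M$ is generated by $M(0)$, and $M(0)$ is an irreducible, hence finite-dimensional, $A_g(V)$-module. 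The operator $L(0)=o(\omega)=\omega_1$ preserves the grading, since taking $a=\omega$, $m=1$ in $a_mM(p)\subseteq M(p+\wt a-m-1)$ gives $\omega_1M(p)\subseteq M(p)$; moreover $[L(0),a_m]=(\wt a-m-1)a_m$ for homogeneous $a$, so $[L(0),o(a)]=[L(0),a_{\wt a-1}]=0$ for every $a\in V^0$, and hence $L(0)$ commutes with the $A_g(V)$-action on $M(0)$ and acts on the irreducible module $M(0)$ as a scalar $\l$ by Schur's lemma. Writing an element of $M(n)$ as a sum of monomials $a^{(1)}_{m_1}\cdots a^{(k)}_{m_k}w$ with $w\in M(0)$ and $\sum_i(\wt a^{(i)}-m_i-1)=n$, and commuting $L(0)$ to the right, one sees that $L(0)$ acts on $M(n)$ by the scalar $\l+n$; hence $M=\bigoplus_{n\in\frac1T\Z_+}M_{\l+n}$ with $M_{\l+n}=M(n)$. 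Each $M(n)$ is finite-dimensional (a standard consequence of $g$-rationality, via the higher twisted Zhu algebras $A_{g,n}(V)$; cf.~\cite{DLM3}), and the vanishing condition of Definition~\ref{ordinary} holds since all occurring exponents lie in $\l+\frac1T\Z_+$. Thus $M$ is an ordinary $g$-twisted $V$-module and $\l$ is its conformal weight.

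For (3), assume in addition that $V$ is $C_2$-cofinite and $g^i$-rational for all $i\ge 0$. I would invoke the modular invariance of trace functions in orbifold theory (reviewed in Section~3; see \cite{Z} for the untwisted case and \cite{DLM4} in general): the trace functions attached to the commuting pairs $(g^i,g^j)$ span a finite-dimensional space $\mathcal F$ which carries a (linear, not merely projective) representation $\rho$ of $SL(2,\Z)$ and which contains every $q$-character $\chi_M(\tau)=\tr_M q^{L(0)-c/24}$ of an irreducible $g^i$-twisted module. Writing $s=\begin{pmatrix}0&-1\\1&0\end{pmatrix}$ and $t=\begin{pmatrix}1&1\\0&1\end{pmatrix}$, the operator $\rho(t)$ acts on $q$-expansions by the diagonal rescaling $q^{\mu}\mapsto e^{2\pi i\mu}q^{\mu}$, hence acts semisimply on the invariant subspace $\mathcal F$, with eigenvalues $e^{2\pi i(\l_M+\frac jT-\frac c{24})}$ where $0\le j\le T-1$ and $M$ runs over the irreducible $g^i$-twisted modules. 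The decisive point is that $\rho(t)$ has finite order, equivalently that all these eigenvalues are roots of unity; this is the Anderson-Moore rationality argument, which exploits the relations $s^4=I$ and $(st)^3=s^2$ in $SL(2,\Z)$ together with the integrality and positivity constraints on the fusion rules, and it is in effect the rationality forerunner of the congruence-subgroup property established in \cite{DLN}. Granting it, every exponent $\l_M+\frac jT-\frac c{24}$ is rational; taking for $M$ the untwisted adjoint module $V$ (so that $\l_V=0$) and $j=0$ gives $\frac c{24}\in\Q$, whence $c\in\Q$, and then $\l_M\in\Q$ for all $M$, in particular for every irreducible $g$-twisted $V$-module.

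The main obstacle is the finite-order statement required in (3): once the structure theory of $A_g(V)$ is available, parts (1) and (2) reduce to bookkeeping, whereas the rationality of $c$ and $\l$ genuinely needs the full modular machinery together with the Galois-theoretic control of the matrices representing $SL(2,\Z)$, and this is the technical heart of the whole circle of ideas.
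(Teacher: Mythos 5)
The paper gives no proof of this theorem: it is imported verbatim from \cite{DLM3}, with part (3) resting on the modular-invariance machinery of \cite{DLM4}. Your outline --- the twisted Zhu algebra $A_g(V)$ bijection and semisimplicity for (1) and (2), and for (3) the Anderson--Moore argument that $\rho(t)$ has finite order on the $SL(2,\Z)$-invariant span of trace functions attached to the cyclic group $\langle g\rangle$ --- is essentially the proof given in those references, so it matches the intended argument.
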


A \voa $V$ is called regular if every weak $V$-module is a direct sum of irreducible $V$-modules \cite{DLM2}.
A \voa $V=\oplus_{n\in \Z}V_n$  is said to be of CFT type if $V_n=0$ for negative
$n$ and $V_0=\C {\bf 1}.$
It is proved in \cite{ABD}, \cite{L}  that if  $V$ is of CFT type, then regularity is equivalent to rationality and $C_2$-cofiniteness. Also $V$ is regular if and only if the weak module category is semisimple \cite{DYu}.

The following results from \cite{M} and \cite{CM} play important roles in this paper.
\begin{thm}\label{M} If $V$ is a regular vertex operator algebra of CFT type and $G$ is a finite  solvable subgroup of $\Aut(V)$ then
$V^G$ is regular.
\end{thm}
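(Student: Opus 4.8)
The plan is to reduce the statement to the case where $G$ is cyclic of prime order, and in that case to combine a $C_2$-cofiniteness theorem with a rationality theorem for the fixed-point subalgebra. \emph{Reduction to cyclic prime order.} Since $G$ is solvable it admits a chain $1=G_0\triangleleft G_1\triangleleft\cdots\triangleleft G_k=G$ with each quotient $G_{i+1}/G_i$ cyclic of prime order, and we argue by induction on $|G|$. If $|G|=1$ there is nothing to prove. Otherwise put $N=G_{k-1}$, so that $N\triangleleft G$ with $G/N$ cyclic of prime order $p$. The subgroup $N$ is solvable of smaller order, so by the inductive hypothesis $V^N$ is regular; it is of CFT type because automorphisms fix $\mathbf 1$ and $\o$, whence $(V^N)_0=\C\mathbf 1$ and $(V^N)_n=0$ for $n<0$. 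Now $G/N$ acts on $V^N$ by automorphisms of order dividing $p$ with $(V^N)^{G/N}=V^G$, so it suffices to treat the case where $V$ is regular of CFT type and $G=\langle g\rangle$ with $g$ of prime order $p$.

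\emph{Reduction of regularity to rationality and $C_2$-cofiniteness.} By the theorem of \cite{ABD}, \cite{L} recalled above, a \voa of CFT type is regular precisely when it is rational and $C_2$-cofinite. As $V^{\langle g\rangle}$ is of CFT type, it is enough to prove it is rational and $C_2$-cofinite. From the regularity of $V$ we know that $V$ is rational and $C_2$-cofinite, and moreover that $V$ is $g^i$-rational for every $i\ge 0$; hence by Theorem \ref{grational} the set of irreducible $g^i$-twisted $V$-modules is finite for each $i$, and each such module is $C_2$-cofinite. These twisted modules will play the role of building blocks for the $V^{\langle g\rangle}$-module theory.

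\emph{$C_2$-cofiniteness of $V^{\langle g\rangle}$.} Here one follows Miyamoto \cite{M}. Using the $g$-eigenspace decomposition $V=\bigoplus_{r\in\Z/p\Z}V^r$ together with the hypothesis that $V$ is $C_2$-cofinite, one constructs a finite spanning set for $V$ modulo the span of the elements $u_{-2}w$ with $u\in V^{\langle g\rangle}$ and $w\in V$, by an induction on conformal weight that keeps track of how repeated products of the form $u_{-2}(\,\cdot\,)$ shift elements among the eigenspaces $V^r$. \textbf{This spanning-set bookkeeping is the main technical obstacle and is the core of \cite{M}}; everything else in the argument is comparatively formal once it is in place.

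\emph{Rationality of $V^{\langle g\rangle}$ and conclusion.} This is the contribution of \cite{CM}. One first shows that every irreducible admissible $V^{\langle g\rangle}$-module embeds, as a $V^{\langle g\rangle}$-submodule, into an irreducible $g^i$-twisted $V$-module for some $0\le i\le p-1$, so there are only finitely many of them. Then, given an arbitrary weak $V^{\langle g\rangle}$-module $W$, one passes to an induced object built from $V$ and $W$ which is a direct sum of weak $g^i$-twisted $V$-modules, hence semisimple by the $g^i$-rationality of $V$; feeding in the $C_2$-cofiniteness established in the previous step and the finiteness of the list of irreducibles forces $W$ itself to be a direct sum of irreducible $V^{\langle g\rangle}$-modules. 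Thus $V^{\langle g\rangle}$ is rational and $C_2$-cofinite, and therefore regular by \cite{ABD}, \cite{L}, which closes the induction on $|G|$ and proves the theorem.
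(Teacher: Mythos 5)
You should first be aware that the paper does not prove this statement at all: it is imported verbatim from \cite{M} and \cite{CM} ("The following results from \cite{M} and \cite{CM} play important roles in this paper"), so there is no internal proof to compare against. Measured against that, your write-up is an accurate description of how the result is established in the literature. The reduction via a composition series $1=G_0\triangleleft\cdots\triangleleft G_k=G$ to the case of a single automorphism of prime order is correct and is indeed the standard use of solvability; the one point you should make explicit is that for the inductive step you need $V^N$ to again satisfy the hypotheses of the prime-order case (in particular simplicity of $V^N$ and faithfulness of the $G/N$-action on it, which come from quantum Galois theory \cite{DM}, \cite{DLM1} when $V$ is simple --- a hypothesis the statement does not literally contain). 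The equivalence of regularity with rationality plus $C_2$-cofiniteness for CFT-type algebras is correctly invoked from \cite{ABD}, \cite{L}.

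However, as a proof your proposal has a genuine gap, and you say so yourself: the two steps that carry the entire mathematical content --- Miyamoto's construction of a finite spanning set for $V^{\langle g\rangle}/C_2(V^{\langle g\rangle})$, and the Carnahan--Miyamoto argument that every weak $V^{\langle g\rangle}$-module is semisimple --- are flagged as "the main technical obstacle" and "the contribution of \cite{CM}" but not actually carried out. Everything you do supply (the composition-series induction, the CFT-type bookkeeping, the reduction of regularity to rationality and $C_2$-cofiniteness) is the comparatively formal shell around those two theorems. So what you have written is a correct citation-level outline, on par with the paper's own treatment of this statement, rather than a proof; if a proof were actually demanded, the missing content is precisely the spanning-set induction of \cite{M} and the semisimplicity argument of \cite{CM}.
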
 

Using a result from \cite{ADJR} and Theorem \ref{M} we have 
\begin{thm}\label {ADJR} If $V$ is regular vertex operator algebra of CFT type then $V$ is $g$-rational for any finite order automorphism 
$g$ of $V.$ 
\end{thm}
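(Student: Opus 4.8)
The plan is to pass to the cyclic orbifold $V^{\langle g\rangle}$ and reduce the $g$-rationality of $V$ to ordinary rationality of $V^{\langle g\rangle}$. Fix $g$ of order $T$ and set $G=\langle g\rangle$. Since $T<\infty$, $G$ is a finite cyclic, hence solvable, subgroup of $\Aut(V)$, so Theorem \ref{M} will apply provided $V^G$ is of CFT type. This is immediate: $g\mathbf 1=\mathbf 1$ gives $V^G_0=(V_0)^G=\C\mathbf 1$, and $V^G_n\subseteq V_n=0$ for $n<0$. Hence Theorem \ref{M} yields that $V^G$ is regular, and by the equivalence of \cite{ABD}, \cite{L} for CFT-type vertex operator algebras, $V^G$ is simultaneously rational and $C_2$-cofinite.

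Next I would invoke the result of \cite{ADJR}: for a regular vertex operator algebra $V$ of CFT type, the $g$-rationality of $V$ is governed by the cyclic orbifold, so once $V^{\langle g\rangle}$ is known to be rational and $C_2$-cofinite, $V$ is $g$-rational. Concretely, one restricts an arbitrary admissible $g$-twisted $V$-module $M$ to $V^{\langle g\rangle}$, where rationality forces a decomposition of $M$ into irreducible $V^{\langle g\rangle}$-modules; then, using the induction functor from admissible $V^{\langle g\rangle}$-modules to admissible $g$-twisted $V$-modules, together with the description of $V=\bigoplus_{r\in\Z/T\Z}V^r$ as a $V^{\langle g\rangle}$-module, one shows that $M$ is in fact a direct sum of irreducible $g$-twisted $V$-modules. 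Since $g$ was an arbitrary automorphism of finite order, this is the assertion of the theorem; moreover, $V$ being $g$-rational, Theorem \ref{grational} then guarantees that there are only finitely many such irreducible modules and that each is an honest $g$-twisted $V$-module.

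The step I expect to be the main obstacle --- and the one that must be imported from \cite{ADJR} rather than derived formally --- is the passage from complete reducibility of $M$ over the subalgebra $V^{\langle g\rangle}$ to complete reducibility over $V$ in the $g$-twisted sense: a priori the larger algebra $V$ could glue the irreducible $V^{\langle g\rangle}$-constituents of $M$ into a nonsplit $g$-twisted extension, and excluding this requires genuine control of the induced modules and of the intertwining-operator (fusion) data of the untwisted and $g^i$-twisted module categories of $V^{\langle g\rangle}$. Everything else --- solvability of $\langle g\rangle$, the CFT-type check for $V^G$, and translating ``regular'' into ``rational and $C_2$-cofinite'' --- is routine bookkeeping.
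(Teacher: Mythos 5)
Your proposal matches the paper's own (uncited-in-detail) argument exactly: the paper derives Theorem \ref{ADJR} precisely by applying Theorem \ref{M} to the finite cyclic (hence solvable) group $\langle g\rangle$ and then invoking the result of \cite{ADJR} relating rationality of $V^{\langle g\rangle}$ to $g$-rationality of $V$. Your identification of the genuinely hard step (lifting complete reducibility over $V^{\langle g\rangle}$ to the $g$-twisted category of $V$) as the content imported from \cite{ADJR} is accurate, and the remaining bookkeeping is as routine as you describe.
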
 

\section{Modular invariance}

We  present  the modular invariance property of the trace functions in orbifold theory  from \cite{Z} and \cite{DLM4} in this section.

In order to discuss the modular invariance of trace functions we consider  the action of $\Aut(V)$ on twisted modules \cite{DLM4}. Let $g, h$ be two automorphisms of $V$ with $g$ of finite order. If $(M, Y_M)$ is a weak $g$-twisted $V$-module, then  $(M\circ h, Y_{M\circ h})$ is   a weak $h^{-1}gh$-twisted  $V$-module where $M\circ h\cong M$ as vector spaces and
\begin{equation*}
Y_{M\circ h}(v,z)=Y_M(hv,z)
\end{equation*}
for $v\in V.$
This defines a right action of $\Aut(V)$ on weak twisted $V$-modules and on isomorphism
classes of weak twisted $V$-modules. For short, we write
\begin{equation*}
(M,Y_M)\circ h=(M\circ h,Y_{M\circ h})= M\circ h.
\end{equation*}

Assume that $g,h$ commute.  Denote by $\mathscr{M}(g)$ the equivalence classes of irreducible $g$-twisted $V$-modules and set $\mathscr{M}(g,h)=\{M \in \mathscr{M}(g)| h\circ M\cong M\}.$  Then $\mathscr{M}(g,h)$ is a subset of $\mathscr{M}(g).$
By  Theorems \ref{grational} and  \ref{ADJR} ,   if $V$ is a regular vertex operator algebra of CFT type,  both $\mathscr{M}(g)$ and $\mathscr{M}(g,h)$ are finite sets.  Let  $M\in \mathscr{M}(g,h).$ There is a linear isomorphism $\varphi(h)$ from $M$ to $M$ such that  :
\begin{equation*}
\varphi(h)Y_M(v,z)\varphi(h)^{-1}=Y_M(hv,z)
\end{equation*}
for $v\in V.$  Note that  $\varphi(h)$ is unique up to a nonzero scalar. If $h=1$ we simply take $\varphi(1)=1.$
For $v\in V$ we set
\begin{equation*}
Z_M(v, (g,h),\tau)=\tr_{_M}o(v)\varphi(h) q^{L(0)-c/24}=q^{\lambda-c/24}\sum_{n\in\frac{1}{T}\Z_+}\tr_{_{M_{\l+n}}}o(v)\varphi(h)q^{n}
\end{equation*}
which is a holomorphic function on the upper half plane $\H$ \cite{DLM4} with $q=e^{2\pi i\tau}.$  Note that $Z_M(v, (g,h),\tau)$ is defined up to a nonzero scalar. 
If   $h=1$ and $v=\1$  then $Z_M(\1,(g,1),\tau)$ is called the $q$-character of $M$ and is denoted by $\chi_M(\tau).$ 

For the modular invariance we need another vertex operator algebra $(V, Y[~], \1, \tilde{\omega})$ associated to $V$
in \cite{Z}.  Here $\tilde{\omega}=\omega-c/24$ and
$$Y[v,z]=Y(v,e^z-1)e^{z\cdot \wt v}=\sum_{n\in \Z}v[n]z^{-n-1}$$
for homogeneous $v.$ We also write
$$Y[\tilde{\omega},z]=\sum_{n\in \Z}L[n]z^{-n-2}.$$
If $v\in V$ is homogeneous in the second vertex operator algebra, we denote its weight by $\wt [v].$

From now on we assume that $V$ is a regular vertex operator algebra of CFT type and $G$ a finite automorphism group of $V.$ 
Set $\M=\cup_{g\in G}\M(g).$
As usual, let $P(G)$ denote the ordered commuting pairs in $G.$
For $(g,h)\in P(G)$ and $M\in \mathscr{M}(g,h),$ $Z_M(v,(g,h),\tau)$ is a function
on $V\times \H.$ Let ${\cal W}$ be the vector space spanned by these functions. It is clear that
the dimensional of $\W$ is equal to $\sum_{(g,h)\in P(G)}|\mathscr{M}(g,h)|.$
Define an action of the modular group $\Gamma=SL(2,\Z)$ on $\W$ such that
\begin{equation*}
Z_M|_\gamma(v,(g,h),\tau)=(c\tau+d)^{-{\rm wt}[v]}Z_M(v,(g,h),\gamma \tau),
\end{equation*}
where
\begin{equation}\label{e1.11}
\gamma: \tau\mapsto\frac{ a\tau + b}{c\tau+d},\ \ \ \gamma=\left(\begin{array}{cc}a & b\\ c & d\end{array}\right)\in\Gamma.
\end{equation}
 We let $\gamma\in \Gamma$ act on the right of $P(G)$ via
$$(g, h)\gamma = (g^ah^c, g^bh^d ).$$

Using Theorem \ref{ADJR}，  we have the following results \cite{DLM4} .
\begin{thm}\label{minvariance} (1) There is a representation $\rho: \Gamma\to GL(\W)$
such that for $(g, h)\in P(G),$   $\gamma =\left(\begin{array}{cc}a & b\\ c & d\end{array}\right)\in \Gamma,$
and $M\in \mathscr{M}(g,h),$
$$
Z_{M}|_{\gamma}(v,(g,h),\tau)=\sum_{N\in \mathscr{M}(g^ah^c,{g^bh^d)}} \gamma_{M,N} Z_{N}(v,(g, h)\gamma, ~\tau)$$
where $\rho(\gamma)=(\gamma_{(M,g,h), (N, g_1,h_1)})$ for $N\in\M$ and $(g_1,h_1)\in P(G)$ and we use  $ \gamma_{M,N}$
to denote $ \gamma_{(M,g,h), (N, g^ah^c,g^bh^d)}$ for $N\in  \mathscr{M}(g^ah^c,g^bh^d)$ for short.
That is,
$$Z_{M}(v,(g,h),\gamma\tau)=(c\tau+d)^{{\rm wt}[v]}\sum_{N\in \mathscr{M}(g^ah^c,{g^bh^d)}} \gamma_{M,N} Z_{N}(v,(g, h)\gamma, ~\tau).$$

(2) The cardinalities $|\mathscr{M}(g,h)|$ and $|\mathscr{M}(g^ah^c,g^bh^d)|$ are equal for any $(g,h)\in P(G)$ and
$\gamma\in \Gamma.$ In particular, the number of inequivalent irreducible $g$-twisted $V$-modules is exactly the number of irreducible $V$-modules which are  $g$-stable.
\end{thm}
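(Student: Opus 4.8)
The plan is to follow the strategy of \cite{Z}, as extended to orbifolds in \cite{DLM4}, now streamlined by the results of Section~2. Since $V$ is regular of CFT type it is $C_2$-cofinite, and by Theorem~\ref{ADJR} it is $g^i$-rational for every $g\in G$ and all $i\ge 0$; combined with Theorem~\ref{grational}(3) this forces every central charge and conformal weight occurring below to be rational, so each $Z_M(v,(g,h),\tau)$ genuinely expands as $q^{\lambda-c/24}$ times a power series in $q^{1/T}$. The first step is convergence: $C_2$-cofiniteness bounds $\dim M_{\lambda+n}$ sub-exponentially and, more generally, lets one reduce $\tr_M o(v)\varphi(h)q^{L(0)-c/24}$, modulo the relations coming from $C_2(V)$, to finitely many leading traces, so each $Z_M(v,(g,h),\tau)$ converges absolutely on compact subsets of $\H$ to a holomorphic function (this part of the statement is already recalled above, following \cite{DLM4}).

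The core construction is the space of genus-one twisted conformal blocks. For a commuting pair $(g,h)$ one introduces formal trace-type $n$-point functions on the torus $\C/(\Z+\Z\tau)$ subject to the twisted recursion relations of \cite{DLM4}, whose structure functions are the Eisenstein series $E_k(\tau)$, the Weierstrass function $\wp$, and twist-dependent analogues built from Jacobi theta functions (indexed by the pair of roots of unity recording the two twists). Then $C_2$-cofiniteness shows this solution space is finite-dimensional --- the twisted form of Zhu's finiteness lemma. One next verifies that the total block space is stable under $\Gamma=SL(2,\Z)$: under the generators $S:\tau\mapsto-1/\tau$ and $T:\tau\mapsto\tau+1$ these structure functions transform with the automorphy factor $(c\tau+d)^k$, while the pair of twist labels is permuted exactly as $(g,h)\mapsto(g^ah^c,g^bh^d)$ --- the analytic shadow of the fact that the monodromy data $(g,h)$ of a flat section on the torus transforms that way under the corresponding change of basis of $H_1$. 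Hence $\Gamma$ acts on the block space with automorphy factor $(c\tau+d)^{\wt[v]}$, carrying blocks of monodromy $(g,h)$ to blocks of monodromy $(g,h)\gamma$.

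Next one identifies the span of the trace functions inside the block space. Each $Z_M(v,(g,h),\tau)$ is a block of monodromy $(g,h)$; conversely, the finiteness lemma presents any block of monodromy $(g,h)$ as a linear functional on a finite-dimensional quotient of $V$, and by $g$-rationality every functional that actually occurs is realized as a trace over some admissible $g$-twisted $V$-module, which by semisimplicity splits into irreducibles, the $h$-monodromy condition forcing each summand to lie in $\mathscr{M}(g,h)$. So $\{Z_M:M\in\mathscr{M}(g,h)\}$ spans the monodromy-$(g,h)$ part of the block space; as these functions are linearly independent (the fact already used in computing $\dim\W$ above), that part has dimension $|\mathscr{M}(g,h)|$, and the block space is the direct sum over $(g,h)\in P(G)$ of these pieces --- that is, it is $\W$. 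Restricting the $\Gamma$-action to $\W$ gives the representation $\rho$, and reading off its matrix coefficients --- noting that the image of $Z_M|_\gamma$ lies in the $(g,h)\gamma$-piece --- yields the transformation formula of part~(1).

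For part~(2), by part~(1) the operator $\rho(\gamma)$ maps $\W_{(g,h)}:=\mathrm{span}\{Z_M:M\in\mathscr{M}(g,h)\}$ into $\W_{(g,h)\gamma}$, while $\rho(\gamma)^{-1}=\rho(\gamma^{-1})$ maps $\W_{(g,h)\gamma}$ into $\W_{(g,h)}$; hence $\rho(\gamma)$ restricts to a linear isomorphism $\W_{(g,h)}\to\W_{(g,h)\gamma}$, and since $\dim\W_{(g,h)}=|\mathscr{M}(g,h)|$ the two cardinalities agree. Taking $g=1$ and $\gamma=S$ gives $(1,h)S=(h,1)$, so $|\mathscr{M}(1,h)|=|\mathscr{M}(h,1)|$: the left side counts $h$-stable irreducible (untwisted) $V$-modules and the right side counts all irreducible $h$-twisted $V$-modules. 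The main obstacle is the interlocking pair at the heart of the argument --- the finiteness of the twisted block space (the twisted Zhu finiteness lemma) and the exact matching, through the $S$- and $T$-transformations, of the theta-type structure functions with the recursion; making the $C_2$-cofiniteness estimates uniform enough to control all the series simultaneously, and keeping track of the twist and phase factors through the modular action, is the technical crux.
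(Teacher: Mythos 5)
The paper does not actually prove this theorem: it is imported verbatim from \cite{DLM4}, with Theorem~\ref{ADJR} supplying the $g^ah^c$-rationality hypotheses needed there, so there is no in-paper argument to compare against. Your sketch correctly reproduces the architecture of the \cite{DLM4}/Zhu proof --- convergence and finite-dimensionality of the twisted conformal blocks via $C_2$-cofiniteness, modular stability of the block space with the twist labels transforming as $(g,h)\mapsto(g^ah^c,g^bh^d)$, spanning of each block by the trace functions via $g$-rationality, and part (2) from the invertibility of $\rho(\gamma)$ --- and is consistent with the source the paper relies on.
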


We remark that if $G=\{1\},$ Part (1) was established in \cite{Z}.  Part (2) in this case is trivial.

\section{Congruence subgroup property}

We are now in a position to state the main result in this paper.
\begin{thm}\label{main} Let $V$ be a regular, selfdual  vertex operator algebra of CFT type and $G$ a finite automorphism group of $V.$ 
Then the kernel of $\rho$ is a congruence subgroup.
\end{thm}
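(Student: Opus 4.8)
The strategy is to reduce the general orbifold statement to the case $G = \{1\}$, where the congruence subgroup property has already been established in \cite{DLN}. The crucial structural observation, as indicated in the introduction, is that each trace function $Z_M(v,(g,h),\tau)$ depends only on the pair $(g,h)$, which commutes, hence generates an abelian — in particular solvable — subgroup $H = \langle g,h\rangle$ of $G$. By Theorem \ref{M}, $V^H$ is a regular vertex operator algebra of CFT type; since $V$ is selfdual, one checks that $V^H$ remains selfdual (or, more to the point, one works with the known modular data of $V^H$ directly). Thus I would first argue that it suffices to prove the theorem under the additional assumption that $G$ is abelian, or even cyclic where convenient, because the span $\W$ decomposes as a sum over commuting pairs and $\rho$ acts block-diagonally on the orbits of $\Gamma$ acting on $P(G)$; the kernel of $\rho$ is the intersection, over all such orbits, of the kernels of the corresponding subrepresentations, and an intersection of finitely many congruence subgroups is a congruence subgroup.

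Next, fixing an orbit of $\Gamma$ on $P(G)$ containing $(g,h)$, I would relate the $\Gamma$-subrepresentation on $\mathrm{span}\{Z_M(v,(g',h'),\tau) : (g',h') \in (g,h)\Gamma,\ M\in\M(g',h')\}$ to the representation of $\Gamma$ (or a finite-index subgroup thereof) on the untwisted conformal blocks of $V^H$, where $H = \langle g,h\rangle$. The point is that every $g$-twisted $V$-module, restricted to $V^H$, decomposes into ordinary $V^H$-modules, and the operator $\varphi(h)$ implementing the $h$-action corresponds, after this restriction, to a decomposition into isotypic components indexed by characters of a suitable abelian group; concretely, the functions $Z_M(v,(g,h),\tau)$ for $V$ with its $G$-action are linear combinations of ordinary trace functions $Z_N(\cdot,\tau)$ for $V^H$-modules $N$, with coefficients that are roots of unity. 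This is essentially the statement that orbifold trace functions live in the untwisted conformal blocks of $V^H$, combined with the $S$- and $T$-transformation formulas of Theorem \ref{minvariance}. Having placed $\W$ (orbit by orbit) inside the conformal blocks of $V^H$ as a $\Gamma$-submodule on which $\Gamma$ acts through the same matrices (up to the finite-order scalars from roots of unity), I invoke \cite{DLN}: the kernel of the $SL(2,\Z)$-representation on the conformal blocks of the regular vertex operator algebra $V^H$ is a congruence subgroup, say of level $N_H$.

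The last step is to assemble the pieces: $\ker\rho$ contains the intersection $\bigcap_H \Gamma(N_H')$ over the finitely many subgroups $H = \langle g,h\rangle$ arising from pairs in $P(G)$, where $N_H'$ is a common refinement of $N_H$ and the order of the roots of unity appearing in the change-of-basis; this intersection is $\Gamma(N)$ for $N = \mathrm{lcm}$ of the $N_H'$, hence $\ker\rho$ is a congruence subgroup. I expect the main obstacle to be the bookkeeping in the second step — precisely identifying $\W$ restricted to one $\Gamma$-orbit with a $\Gamma$-submodule of the $V^H$-conformal blocks on which the action agrees with \cite{DLN}'s, and in particular controlling that $\varphi(h)$ contributes only finitely-many-valued (root-of-unity) corrections rather than disturbing the modular action in an uncontrolled way. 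A secondary subtlety is handling the weight factor $(c\tau+d)^{\mathrm{wt}[v]}$ uniformly: since one may restrict to $v = \mathbf 1$ (the $q$-character) to get the cleanest statement and then note that the vector-valued modular form structure for general $v$ is governed by the same representation $\rho$, the congruence property of $\rho$ is unaffected by the choice of $v$. Once these identifications are in place, the conclusion is immediate from \cite{DLN} and the fact that finite intersections of congruence subgroups are congruence subgroups.
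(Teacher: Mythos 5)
Your proposal follows essentially the same route as the paper: pass to the abelian subgroup $H=\langle g,h\rangle$, invoke Theorem \ref{M} to get regularity of $V^H$, verify selfduality of $V^H$, decompose each irreducible $g$-twisted module into $\varphi(h)$-eigenspaces (which are sums of irreducible $V^H$-modules) so that $Z_M(v,(g,h),\tau)$ becomes a root-of-unity-weighted combination of ordinary $V^H$-trace functions, apply the $G=\{1\}$ congruence result of \cite{DLN} to $V^H$, and take the least common multiple of the resulting levels over all commuting pairs. This is exactly the content of Lemma \ref{lem} and the ensuing proof of Theorem \ref{main}.
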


The proof of Theorem \ref{main} is based on the congruence subgroup property for $G=\{1\}$ given in \cite{DLN}. In this case
we denote $Z_M(v,(1,1),\tau)$ simply by $Z_M(v,\tau).$ Let $\{M^0,...,M^p\}$ be the inequivalent  irreducible $V$-modules.  Denote  the weight of $M^i$ by $\lambda_i.$ 
By Theorem \ref{grational}, $\lambda_i$ and $c$ are rational numbers.  Let $s$ be the smallest positive integer such that $s(\lambda_i-c/24)$ is an integer for all $i.$  The following result was obtained in \cite{DLN}.
\begin{thm}\label{DLN} If $V$ is a regular and selfdual and $G=\{1\},$  the kernel of $\rho$ contains the congruence subgroup $\Gamma(s).$
\end{thm}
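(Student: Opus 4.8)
Since, by Theorem \ref{minvariance}, the coefficients $\gamma_{M,N}$ are independent of $v\in V$ and here the pair $(g,h)=(1,1)$ is fixed, $\rho$ is an ordinary (not merely projective) linear representation of $\Gamma=SL(2,\Z)$ on $\mathcal W\cong\C^{p+1}$, with basis indexed by the irreducible modules $M^0,\dots,M^p$ (recall $\dim\mathcal W=|\mathscr M(1,1)|=p+1$). Write $\mathfrak s=\left(\begin{smallmatrix}0&-1\\1&0\end{smallmatrix}\right)$, $\mathfrak t=\left(\begin{smallmatrix}1&1\\0&1\end{smallmatrix}\right)$ for the standard generators and put $S=\rho(\mathfrak s)=(S_{ij})$, $T=\rho(\mathfrak t)=(T_{ij})$. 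The first step is to pin down $T$: from $Z_{M^i}(\mathbf 1,\tau)=q^{\lambda_i-c/24}\sum_{n\ge 0}(\dim M^i_{\lambda_i+n})q^n$ and $q=e^{2\pi i\tau}$ one reads off that $\mathfrak t$ acts diagonally, $T=\diag\big(e^{2\pi i(\lambda_i-c/24)}\big)$. Hence $T^m=I$ exactly when $s\mid m$, so $\mathrm{ord}(T)=s$, $\mathfrak t^{\,s}\in\ker\rho$, and the cusp width of $\ker\rho$ at $\infty$ equals $s$.

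The second step is to record the structural identities of the pair $(S,T)$. By the now-standard tensor-category results for regular, selfdual VOAs of CFT type (Huang), $\mathrm{Mod}(V)$ is a modular tensor category with modular datum $(S,T)$; thus $S$ is symmetric, $\rho(-I)=C$ is the charge-conjugation permutation $C_{ij}=\delta_{i,j'}$ (selfduality forcing $0'=0$), and since $\mathfrak s^2=(\mathfrak s\mathfrak t)^3=-I$ in $\Gamma$ we automatically get $S^2=(ST)^3=C$, with $C^2=I$ and $C$ commuting with $S$ and $T$. Moreover Huang's Verlinde formula gives the fusion rules $N_{ij}^{\,k}=\sum_{m}S_{im}S_{jm}\overline{S_{km}}/S_{0m}\in\Z_{\ge 0}$, so the columns of $S$ are simultaneous eigenvectors of the commuting integral fusion matrices $N_i=(N_{ij}^{\,k})$; consequently each ratio $S_{ij}/S_{0j}$ is an eigenvalue of an integral matrix, hence an algebraic integer, and the entries $S_{ij}$ generate an abelian extension of $\Q$.

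The heart of the argument is the arithmetic of $S$. Let $n$ be a positive integer with all $S_{ij}/S_{0j}\in\Q(\zeta_n)$. Applying $\Gal(\Q(\zeta_n)/\Q)$ to the Verlinde formula yields, for each $\sigma$, a permutation $\hat\sigma$ of $\{0,\dots,p\}$ and signs $\epsilon_\sigma(i)\in\{\pm1\}$ with
\begin{equation*}
\sigma(S_{ij})=\epsilon_\sigma(i)\,S_{\hat\sigma(i)j}=\epsilon_\sigma(j)\,S_{i\hat\sigma(j)} .
\end{equation*}
The decisive refinement is to show one may take $n=s=\mathrm{ord}(T)$, i.e. that the whole datum is defined over $\Q(\zeta_s)$; this is obtained by expressing the relevant Gauss sums and generalized Frobenius--Schur indicators of the category through $T$ and invoking their Galois behaviour, so that the conductor of $S$ is controlled by the order of $T$. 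Granting this, the Galois relations together with $S^2=(ST)^3=C$, $C^2=I$, $T^s=I$ are precisely the compatibilities needed to define a representation $\tilde\rho$ of the finite group $SL(2,\Z/s\Z)$ with $\tilde\rho(\bar{\mathfrak s})=S$, $\tilde\rho(\bar{\mathfrak t})=T$: using Bantay's explicit formulas for the operators attached to elements of $SL(2,\Z/s\Z)$, one checks via the Galois identities that they are well defined modulo $s$ and satisfy the defining relations. Since $\rho$ and $\tilde\rho\circ\pi$ agree on the generators, $\rho=\tilde\rho\circ\pi$ for the reduction $\pi:\Gamma\to SL(2,\Z/s\Z)$, whence $\Gamma(s)=\ker\pi\subseteq\ker\rho$. (Formally, one could instead first note that $\mathrm{Im}\,\rho$ is finite, so $\ker\rho$ has finite index and, by Step~1, level dividing $s$; Wohlfahrt's theorem then reduces the claim to $\ker\rho$ being a congruence subgroup, which the factorization through $SL(2,\Z/s\Z)$ supplies.)

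The main obstacle is the refined rationality $S_{ij}\in\Q(\zeta_s)$ with $s=\mathrm{ord}(T)$. Galois symmetry of $S$ over \emph{some} cyclotomic field is classical, but tying its conductor to the order of $T$ is the genuinely new arithmetic input, and is exactly what forces the representation to descend to $SL(2,\Z/s\Z)$ rather than to $SL(2,\Z/n\Z)$ for an uncontrolled $n$. Once it is in hand, the descent and the containment $\Gamma(s)\subseteq\ker\rho$ are comparatively formal.
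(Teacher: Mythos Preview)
The paper does not prove this theorem at all: it is stated with the preamble ``The following result was obtained in \cite{DLN}'' and then used as a black box in the proof of Lemma~\ref{lem} and Theorem~\ref{main}. So there is no argument in the paper to compare your proposal against.

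What you have written is a faithful outline of the strategy actually carried out in \cite{DLN} (building on the Ng--Schauenburg theory of generalized Frobenius--Schur indicators and the Galois symmetry of modular data): identify $T$ as the diagonal matrix of twists so that $\mathrm{ord}(T)=s$; use Huang's theorem that $\mathrm{Mod}(V)$ is a modular tensor category to obtain the Verlinde formula and the relations $S^2=(ST)^3=C$; extract the Galois action on the columns of $S$; and then push the conductor of $S$ down to $s$ via the indicator machinery, so that $\rho$ factors through $SL(2,\Z/s\Z)$. Your identification of the ``main obstacle'' --- that $S_{ij}\in\Q(\zeta_s)$ with $s=\mathrm{ord}(T)$ rather than merely $S_{ij}\in\Q(\zeta_n)$ for some $n$ --- is exactly right, and is the substantial content of \cite{DLN}.

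One small point of friction with the present paper: you invoke Huang's theorem, which in its usual formulation assumes CFT type, whereas the paper explicitly remarks after Theorem~\ref{DLN} that the CFT-type hypothesis is not needed there. If you intend your sketch to cover the statement as formulated here, you should either note that the relevant categorical input can be obtained without CFT type, or simply accept the extra hypothesis (which is in force throughout the rest of the paper anyway).
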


Note that we do not need assumption that $V$ is of CFT type in Theorem \ref{DLN}. This assumption is necessary in the current situation as Theorem \ref{M} requires it. 

\begin{lem}\label{lem}  Let $V$ and $G$ be as in Theorem \ref{main}.  Suppose that  $(g,h)\in P(G).$ Then there is a positive integer $m_{g,h}$ such that for $M\in \M(g,h)$ and $v\in V_{[n]}$ 
$Z_M(v,(g,h),\tau)$ is a modular form on the congruence subgroup $\Gamma(m_{g,h})$ of weight $n.$ 
\end{lem}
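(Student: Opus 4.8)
The plan is to reduce the statement for the pair $(g,h)$ to the already–established $G=\{1\}$ case (Theorem~\ref{DLN}) applied to a suitably chosen sub-vertex-operator-algebra. The key observation is that the trace function $Z_M(v,(g,h),\tau)$ only sees the action of the subgroup $H=\langle g,h\rangle$ of $G$ generated by the commuting pair. Since $g,h$ commute, $H$ is a finite abelian group, hence solvable, so by Theorem~\ref{M} the fixed-point subalgebra $V^H$ is regular, and it is clearly of CFT type (it inherits $V_0=\C\1$ and the grading bounded below). Moreover $V^H$ is selfdual: $V$ is selfdual and $H$ acts by automorphisms, so the contragredient of $V^H$ is again $V^H$ (the invariant form on $V$ restricts to a nondegenerate invariant form on $V^H$). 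Thus $V^H$ satisfies the hypotheses of Theorem~\ref{DLN}.

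Next I would realize each $M\in\M(g,h)$, together with the operator $\varphi(h)$, as a genuine (untwisted) $V^H$-module, so that $Z_M(v,(g,h),\tau)$ for $v\in V^H$ becomes an ordinary graded trace function of $V^H$ in the sense of Theorem~\ref{DLN}. Concretely, a $g$-twisted $V$-module $M$ on which $\varphi(h)$ acts decomposes under the commuting operators $\varphi(g)=e^{-2\pi i L(0)}\cdot(\text{the canonical }g\text{-action})$ and $\varphi(h)$ into joint eigenspaces, and restricting $Y_M$ to $V^H$ makes each such eigenspace an ordinary $V^H$-module; the operator insertion $o(v)\varphi(h)$ with $v\in V^H$ is exactly a component of a graded trace over these $V^H$-modules. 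Hence each $Z_M(v,(g,h),\tau)$ with $v\in V^H_{[n]}$ lies in the span $\W_{V^H}$ of the untwisted trace functions of $V^H$, and by Theorem~\ref{DLN} there is an integer $s'=s(V^H)$ with $\Gamma(s')$ contained in the kernel of the corresponding $SL(2,\Z)$-representation; this gives modularity of weight $n$ on $\Gamma(s')$ for all $v\in V^H$.

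The remaining point is to upgrade from $v\in V^H$ to arbitrary $v\in V$. Here I would use the standard device that the $1$-point functions $Z_M(v,(g,h),\tau)$ are controlled by the finitely many lowest-weight ones via recursion (Zhu-type reduction / the $C_2$-cofiniteness of $V^H$): the space of $(g,h)$-trace functions is spanned, as a module over a ring of modular forms for $SL(2,\Z)$, by finitely many functions, and one can take these spanning functions to come from $v$ in a finite-dimensional space on which $\langle g,h\rangle$ acts; averaging over $H$ (which only multiplies functions by roots of unity depending on the $H$-eigencharacter, hence enlarges the level by at most a bounded factor dividing $|H|\cdot T$) reduces everything to the $V^H$ case. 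Taking $m_{g,h}$ to be a common multiple of $s(V^H)$, the order $T$ of $g$, and $|H|$ then yields a single congruence subgroup $\Gamma(m_{g,h})$ on which every $Z_M(v,(g,h),\tau)$ with $v\in V_{[n]}$ is a modular form of weight $n$.

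The main obstacle I expect is the last paragraph: carefully showing that passing from $V^H$-trace functions to $V$-trace functions costs only a bounded (divisor-of-$|H|T$) enlargement of the level, rather than destroying the congruence property. This requires being precise about how $\varphi(h)$ and the twist interact with Zhu's recursion formulas over $V^H$, i.e.\ that the ``correction terms'' appearing when one replaces a general $v\in V$ by its $H$-isotypic components are themselves expressible through the $V^H$-data with controlled transformation behavior under $\Gamma$. Everything else (solvability of $H$, regularity and selfduality of $V^H$, existence of $s(V^H)$) is a direct appeal to Theorems~\ref{M}, \ref{ADJR}, and \ref{DLN}.
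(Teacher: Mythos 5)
Your first two paragraphs reproduce the paper's own argument: pass to $H=\langle g,h\rangle$, which is abelian hence solvable, so $V^{H}$ is regular by Theorem \ref{M} and of CFT type; check that $V^{H}$ is selfdual (the paper does this via the decomposition $V=\oplus_{\mu\in\hat H}V^{\mu}$ and the fact that the dual of $V^{\mu}$ is $V^{\mu^{-1}}$, which is the same content as your restriction-of-the-invariant-form remark); decompose $M$ into eigenspaces $M_j$ of $\varphi(h)$, observe that each $M_j$ is a direct sum of irreducible $V^{H}$-modules because $\varphi(h)$ commutes with $Y_M(v,z)$ for $v\in V^{H}$, write $Z_M(v,(g,h),\tau)=\sum_{j}e^{2\pi ij/K}Z_{M_j}(v,\tau)$, and invoke Theorem \ref{DLN} for $V^{H}$. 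Up to this point you and the paper agree, and this is the entire content of the paper's proof.

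Where you diverge is your last paragraph, and there you have a genuine gap. You are right that the reduction above only treats $v\in V^{H}_{[n]}$ while the lemma is stated for $v\in V_{[n]}$ (the paper silently proves only the $V^{H}$ case, which is all that is used in the proof of Theorem \ref{main}, where $v\in V^{G}_{[n]}\subseteq V^{H}_{[n]}$). But the machinery you propose to bridge the gap --- a Zhu-type recursion over $V^{H}$ combined with an averaging that ``enlarges the level by at most a factor dividing $|H|T$'' --- is not a proof: the recursion relates one-point functions of different weights over the \emph{same} algebra, it does not convert insertions of $v\notin V^{H}$ into insertions from $V^{H}$, and no mechanism is exhibited that would control the level. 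The correct fix is much simpler and requires no enlargement of $m_{g,h}$ at all: write $v=\sum_{\mu\in\hat H}v^{\mu}$ with $v^{\mu}\in V^{\mu}$. If $\mu(g)\neq 1$ then $v^{\mu}$ lies in $V^{r}$ with $r\neq 0$, so the modes of $Y_M(v^{\mu},z)$ are indexed by $\frac{r}{T}+\Z$ and the degree-preserving component $o(v^{\mu})$ is absent, whence its contribution to the trace vanishes; if $\mu(g)=1$ but $\mu(h)\neq 1$, then $\varphi(h)o(v^{\mu})\varphi(h)^{-1}=\mu(h)\,o(v^{\mu})$ together with $\tr(AB)=\tr(BA)$ forces $\tr_M o(v^{\mu})\varphi(h)q^{L(0)-c/24}=0$. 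Hence $Z_M(v,(g,h),\tau)=Z_M(v^{1},(g,h),\tau)$ with $v^{1}\in V^{H}_{[n]}$, and the case already proved applies verbatim.
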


\begin{proof}  Let $H$ be the subgroup of $G$ generated by $g,h.$ Then $H$ is a finite abelian group. By Theorem \ref{M}, $V^H$
is a regular vertex operator algebra.  We expect to apply Theorem \ref{DLN} to vertex operator algebra $V^H.$  So we need to verify that $V^H$ is selfdual.

We know from \cite{DM} and \cite{DLM1} that $V$ has the following decomposition
$$V=\oplus_{\mu\in \hat H}V^{\mu}$$
where $\hat H$ is the set of irreducible characters of $H$ and $V^{\mu}$ is the subspace of $V$ corresponding to the character $\mu.$  Since $V$ is selfdual, it is easy to see that the dual of $V^{\mu}$ is $V^{\mu^{-1}}.$ In particular, if $\mu=1$ is the trivial character of $H,$ $V^H=V^1$ is selfdual.

From Theorem \ref{DLN} there is a positive integer $m_{g,h}$ such that $Z_W(v,\tau)$ is a modular form of weight 
$n$ on congruence subgroup $\Gamma(m_{g,h})$  for any irreducible $V^H$-module $W$ and any $v\in V^H_{[n]}.$ The main idea is to express $Z_M(v,(g,h),\tau)$ 
as a linear combination of $Z_W(v,\tau)$ for some  irreducible $V^H$-modules $W.$  

Since $V^H$ is rational, it follows from a result in \cite{DRX} that any irreducible $V^H$-module occurs in an irreducible $k$-twisted $V$-module for  some $k\in H.$ Let $H_M=\{k\in H|M\circ k\cong M\}$ be the stabilizer of $M$ in $H.$ Then $H_M$ acts on $M$ projectively. That is, there exists a 2-cocyle $\alpha\in H^2(H_M,S^1)$ such that $\varphi(k_1)\varphi(k_2)=\alpha(k_1,k_2) \varphi(k_1k_2)$ for $k_i\in H_M$ where $S^1$ is the group of unit circle. Then the twisted group algebra $\C^{\alpha}[H_M]$ 
is a finite dimensional semisimple associative algebra. Denote by $\Lambda_M$ the set of irreducible characters of $\C^{\alpha}[H_M].$ For each $\lambda\in \Lambda_M$ let $W_{\lambda}$ be the corresponding irreducible $\C^{\alpha}[H_M]$-module. 
This leads to the following decomposition
$$M=\oplus_{\lambda\in  \Lambda_M}W_{\lambda}\otimes M_{\lambda}$$
where $M_{\lambda}$ is the multiplicity space of $W_{\lambda}$ in $M.$ From \cite{DM}, \cite{DLM1}, \cite{DY}, \cite{MT}, 
$M_{\lambda}$ is an irreducible $V^H$-module. In particular, $M$ is a completely reducible $V^H$-module.

We can assume that $ \varphi(h)$ has finite order $K.$ Then $M=\oplus_{j=0}^{K-1}M_j$ where $M_j$ is the eigenspace 
of  $ \varphi(h)$  with eigenvalue $e^{2\pi i j/K}.$ Using the identity 
$$\varphi(h)Y_M(v,z)\varphi(h)^{-1}=Y_M(hv,z)$$
we see that $\varphi(h)$ and $Y_M(v,z)$ commute for $v\in V^H.$ This implies that each $M_j$ is a $V^H$-submodule 
of $M$ and is a direct sum of irreducible $V^H$-modules $M_{\lambda}$ for $\lambda\in \Lambda_M.$ It follows from the definition
that 
$$Z_M(v,(g,h),\tau)=\sum_{j=0}^{K-1}e^{2\pi ij/K}\tr_{M_j}o(v)q^{L(0)-c/24}.$$
Clearly, each  $Z_{M_j}(v,\tau)=\tr_{M_j}o(v)q^{L(0)-c/24}$ is a modular form on $\Gamma(m_{g,h})$ of weight $n$ for any 
$v\in V^H_{[n]}.$ So is $Z_M(v,(g,h),\tau).$
\end{proof}

{\bf Proof of  Theorem \ref{main}:} Let $m$ be the least common multiple of all $m_{g,h}$ for $(g,h)\in P(G).$ Then for any $v\in V^G_{[n]}$ and $M\in \M,$ $ Z_M(v,(g,h),\tau)$ is a modular form on $\Gamma(m)$ of weight $n.$ That is, the kernel of  $\rho$ contains the congruence subgroup $\Gamma(m).$  The proof is complete.
\vspace{0.5cm}

Although we do not know $V^G$ is rational or $C_2$-cofinite when $G$ is not abelian, but we still have:
\begin{coro}\label{coro} If $V$ is a regular, selfdual vertex operator algebra of CFT type and $G$ is any finite automorphism group of $V$ then
for any irreducible $V^G$-module $N$ occurring in an  irreducible $g$-twisted $V$-module $M$ with $g\in G,$ 
$Z_{N}(v,\tau)$ is a modular form of weight $n$ on  the congruence subgroup $\Gamma(m)$ where $m$ 
is the same as before and $v\in V^G_{[n]}.$ In particular,  the $q$-character $\chi_{N}(\tau)$ is a modular function on
$\Gamma(m).$ 
\end{coro}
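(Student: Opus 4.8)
The plan is to deduce the corollary from Lemma \ref{lem} by writing $Z_N(v,\tau)$ as a $\C$-linear combination of twisted trace functions $Z_M(v,(g,k),\tau)$ attached to commuting pairs in $G$. Fix an irreducible $g$-twisted $V$-module $M$ containing $N$, and let $G_M=\{k\in G\mid M\circ k\cong M\}$. Since $M\circ k$ is $k^{-1}gk$-twisted, $M\circ k\cong M$ can hold only when $k^{-1}gk=g$, so $G_M\subseteq C_G(g)$; hence for every $k\in G_M$ the pair $(g,k)$ lies in $P(G)$, we have $M\in\mathscr{M}(g,k)$, and the operator $\varphi(k)$ and the function $Z_M(v,(g,k),\tau)=\tr_M o(v)\varphi(k)q^{L(0)-c/24}$ are defined. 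By the non-abelian orbifold decomposition of a twisted module as a $V^G$-module (\cite{DM}, \cite{DLM1}, \cite{DY}, \cite{MT}, \cite{DRX}), the group $G_M$ acts projectively on $M$ through the $\varphi(k)$, with some $\alpha\in H^2(G_M,S^1)$, and
$$M=\bigoplus_{\lambda\in\Lambda_M}W_\lambda\otimes M_\lambda ,$$
where $\Lambda_M$ indexes the irreducible $\C^\alpha[G_M]$-modules $W_\lambda$, each multiplicity space $M_\lambda$ is an irreducible $V^G$-module, and every irreducible $V^G$-submodule of $M$ is isomorphic to some $M_\lambda$; thus $N\cong M_\lambda$ for some $\lambda$.

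I would then take traces over this decomposition. For $k\in G_M$ and $v\in V^G$, on the summand $W_\lambda\otimes M_\lambda$ the operator $\varphi(k)$ acts as $\pi_\lambda(k)\otimes 1$, while $o(v)$ and $L(0)$ act as $1\otimes o(v)$ and $1\otimes L(0)$ (because $v,\omega\in V^G$), so
$$Z_M(v,(g,k),\tau)=\sum_{\lambda\in\Lambda_M}\chi_\lambda(k)\,Z_{M_\lambda}(v,\tau),\qquad \chi_\lambda(k)=\tr_{W_\lambda}\pi_\lambda(k).$$
Since $\C^\alpha[G_M]$ is semisimple, its irreducible characters $\chi_\lambda$ are linearly independent as functions on the spanning set $G_M$, so the matrix $(\chi_\lambda(k))_{\lambda\in\Lambda_M,\,k\in G_M}$ has rank $|\Lambda_M|$ and the displayed system determines each $Z_{M_\lambda}(v,\tau)$, in particular $Z_N(v,\tau)$, as a fixed $\C$-linear combination of the functions $Z_M(v,(g,k),\tau)$, $k\in G_M$. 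By Lemma \ref{lem}, for $v\in V_{[n]}$ each $Z_M(v,(g,k),\tau)$ is a modular form of weight $n$ on $\Gamma(m_{g,k})$, and $\Gamma(m)\subseteq\Gamma(m_{g,k})$ because $m_{g,k}\mid m$. Restricting to $v\in V^G_{[n]}\subseteq V_{[n]}$, the same linear combination shows $Z_N(v,\tau)$ is a modular form of weight $n$ on $\Gamma(m)$; taking $v=\mathbf{1}\in V^G_{[0]}$ gives $\chi_N(\tau)=Z_N(\mathbf{1},\tau)$, a modular function on $\Gamma(m)$.

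The step I expect to need the most care is the structural input in the first paragraph: identifying $G_M$ as the right group — in particular the inclusion $G_M\subseteq C_G(g)$, which is exactly what keeps $(g,k)$ a commuting pair so that Lemma \ref{lem} applies — and citing the non-abelian decomposition of $M$ over $V^G$ in the precise form in which the $\varphi(k)$ act as tensor factors on the summands $W_\lambda\otimes M_\lambda$. Once that decomposition is in hand, the trace identity and the inversion of the character matrix are routine, and the modularity conclusion is immediate from Lemma \ref{lem}; note in particular that we never need $V^G$ to be rational, only that $N$ already occurs in some irreducible twisted $V$-module.
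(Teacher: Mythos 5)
Your proposal is correct and follows essentially the same route as the paper: decompose $M=\oplus_\lambda W_\lambda\otimes M_\lambda$ over the twisted group algebra $\C^{\alpha_M}[G_M]$, identify $N$ with some $M_\lambda$, and recover $Z_N(v,\tau)$ from the twisted traces $Z_M(v,(g,k),\tau)$, $k\in G_M$. The only cosmetic difference is that the paper inverts the system explicitly via the orthogonality relation $\frac{1}{|G_M|}\sum_{k\in G_M}\lambda(k)\overline{\mu(k)}=\delta_{\lambda,\mu}$, giving $Z_{M_\lambda}(v,\tau)=\frac{1}{|G_M|}\sum_{k\in G_M}\overline{\lambda(k)}Z_M(v,(g,k),\tau)$, whereas you argue by linear independence of the irreducible characters; both are valid and your explicit remark that $G_M\subseteq C_G(g)$ (so each $(g,k)$ is a commuting pair) is a point the paper leaves implicit.
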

\begin{proof}  First, we recall  the irreducible $V^G$-module occurring in an irreducible $g$-twisted $V$-module $M$
from \cite{MT} and \cite{DRX}.  The above discussion tells us  that $h\mapsto \varphi(h)$ gives a projective representation of $G_M$ on $M.$ Let $\alpha_M\in H^2(G_M,S^1)$ be the corresponding 2-cocycle. Then 
$$\varphi(h)\varphi(k)=\alpha_M(h,k)\varphi(hk),$$
$$\varphi(h)Y_M(v,z)\varphi(h)^{-1}=Y_M(hv,z)$$
for $h,k\in G_M$ and $v\in V.$ As before,  $M$ is a module for the twisted group algebra $\C^{\alpha_M}[G_M]$
and we have decomposition 
$$M=\oplus_{\lambda\in \Lambda_{G_M,\a_M}}W_{\lambda}\otimes M_{\lambda}$$
where $ \Lambda_{G_M,\a_M}$ is the set of irreducible characters of  $\C^{\alpha_M}[G_M],$ 
$W_{\lambda}$ is the irreducible $\C^{\alpha_M}[G_M]$-module  with character $\lambda$ and $ M_{\lambda}$ is the multiplicity of $W_{\lambda}$ in $M.$ 

It follows from \cite{MT} and \cite{DRX} that each $ M_{\lambda}$ is an irreducible $V^G$-module. Moreover,
$M_{\lambda}$ and $M_{\mu}$ are not isomorphic if $\lambda, \mu$ are different. 
Let $\lambda, \mu$ be two irreducible characters of $\C^{\alpha_M}[G_M].$ The following orthogonal relation
 holds:
 $$\frac{1}{|G_M|}\sum_{h\in G_M}\lambda(h)\overline{\mu(h)}=\delta_{\lambda,\mu}$$
where $\overline{\mu(h)}$ is the complex conjugate of $ \mu(h)$ by Lemma 4.3 of \cite{DRX}.
We can now assume that $N=M_{\lambda}$ for some $\lambda\in \Lambda_{G_M,\alpha_M}.$  Then we can easily get
$$Z_{M_{\lambda}}(v,\tau)=\frac{1}{|G_M|}\sum_{h\in G_M}\overline{\lambda(h)}Z_M(v,(g,h),\tau).$$
By Theorem \ref{main}, each $Z_M(v,(g,h),\tau)$ is a modular form on $\Gamma(m).$ The result follows immediately.
\end{proof}

From Corollary \ref{coro} we know that $q$-character $\chi_{V^G}(\tau)$ of $V^G$ is a modular function. This suggests 
 the following conjecture:
\begin{conj}  Let $U$ be a simple  vertex operator algebra. Then 

(1) $U$ is rational if and ony if the $q$-character of each irreducible $U$-module is a modular function on a same  congruence subgroup.

(2) $U$ is rational if and only if the $q$-character of $U$  is a modular function on a congruence subgroup.
\end{conj}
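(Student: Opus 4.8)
The statement is a biconditional, and its two directions are of completely different character, so the plan is to treat them separately. The forward direction (rationality implies modularity of the $q$-characters) is essentially already available: if $U$ is rational---together with the $C_2$-cofiniteness that one expects to accompany rationality for a simple \voa, and which is needed for Zhu's recursion---then Zhu's theorem makes the finitely many irreducible characters span a finite-dimensional representation of $SL(2,\Z)$, and Theorem \ref{DLN} upgrades this to the statement that the kernel of $\rho$ contains $\Gamma(s)$. Hence each $\chi_{M^i}(\tau)$ is a modular function on the single congruence subgroup $\Gamma(s)$, which gives (1), and since $\chi_U$ is one of these characters, (2) follows at once. Thus the entire content of the conjecture is the converse: that modularity of the $q$-characters forces rationality.

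For the converse I would factor the argument into two steps: (i) deduce $C_2$-cofiniteness from modularity, and (ii) deduce semisimplicity of the module category from $C_2$-cofiniteness together with the fact that the characters are honest modular functions. Step (ii) is the more tractable one. Granting $C_2$-cofiniteness, the theory of one-point functions on the torus shows that the span of the \emph{generalized} characters---now including the pseudo-trace functions attached to any logarithmic (non-semisimple) modules---is a finite-dimensional $SL(2,\Z)$-module, but the logarithmic contributions necessarily introduce $\log q$ terms and quasi-modular ($E_2$-type) pieces into the $q$-expansions. The hypothesis that every genuine irreducible character is an ordinary modular function on a congruence subgroup excludes exactly these features, and I would argue that their absence forces every indecomposable module to be irreducible, i.e. the admissible module category is semisimple, which is rationality.

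The crux, and the step I expect to be the main obstacle, is (i): extracting the algebraic finiteness $\dim U/C_2(U)<\infty$ from the analytic input of modularity. The natural handle is the $S$-transformation $\tau\mapsto -1/\tau$, which relates the behaviour of $\chi_U$ as $\tau\to i\infty$ to its behaviour as $\tau\to 0$; modularity of weight $0$ on a congruence subgroup $\Gamma(N)$ then pins down the growth of the graded dimensions $\dim U_n$ to the Cardy-type asymptotics $\exp(2\pi\sqrt{c_{\mathrm{eff}}\,n/6})$. One would like to convert this controlled growth, via the recursion underlying Zhu's theory, into the finite-dimensionality of the space of torus one-point functions, and thence into $C_2$-cofiniteness; equivalently, modularity should force the characters to satisfy a monic modular linear differential equation, whose existence is known to \emph{follow} from $C_2$-cofiniteness but whose converse implication is precisely what is missing. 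I do not see a purely analytic route from the modularity of a character back to the spanning statement defining $C_2(U)$, and closing this gap is where any proof must do its real work.

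Finally, part (2) carries an additional difficulty beyond part (1): it asks to run the converse using only the single vacuum character $\chi_U$, which carries far less information than the full family of irreducible characters. Here the plan would be to exploit the arithmetic rigidity of the congruence property itself---namely that if one component of the putative vector-valued system is already modular on $\Gamma(N)$, the $SL(2,\Z)$-orbit it generates is constrained to an arithmetic, congruence representation---and to attempt to reconstruct the remaining characters, and hence the whole module category, from this single orbit together with the integrality of the $q$-expansion coefficients. Whether a single weight-zero congruence-modular function suffices to recover the entire representation, rather than merely a subrepresentation, is the delicate point, and I would expect part (2) to remain out of reach until part (1) is understood.
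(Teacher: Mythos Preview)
The statement is labelled a \emph{Conjecture} in the paper and is not proved there; the paper's only remarks about it are that (2) implies (1) and that a positive answer would yield rationality of $V^G$ for arbitrary finite $G$. There is therefore no paper proof against which to compare your proposal.

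Your write-up is consistent with this: you do not actually claim a proof but rather sketch a strategy, correctly observing that the forward direction follows from Theorem~\ref{DLN} once one grants the standing hypotheses of $C_2$-cofiniteness and selfduality (which the bare conjecture does not assume, so even this direction is not literally available in full generality), and that the converse is the substantive open part. Your identification of step~(i)---deducing $C_2$-cofiniteness from modularity of the character---as the main obstruction is accurate, and your admission that you ``do not see a purely analytic route'' to it reflects the actual status of the problem. No argument completing that step is known, so what you have written is a reasonable research outline, not a proof, and that is all anyone currently has.
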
 

Clearly,  Conjecture (2) impies (1).  These conjectures  tell us  that the modularity of the $q$-character  of a  vertex operator algebra  is unique for rational vertex operator algebras. If this conjecture is true, it would imply that $V^G$ is rational for any finite automorphism group $G$ of $V.$  

The following conjecture is a converse of  Zhu's result \cite{Z}.
\begin{conj}
  Let $U$ be a simple, $C_2$-cofinite  vertex operator algebra. Let  $M^0,...,M^p$ be the irreducible $U$-modules. Then $U$ is rational if and only if  the space spanned by $\{Z_{M^i}(v,\tau)|i=0,...,p\}$ affords a representation  of the modular group $\Gamma.$
\end{conj}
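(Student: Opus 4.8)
The statement is an equivalence whose two implications are of very different character, and the plan is organized accordingly. One direction is essentially Zhu's theorem: if $U$ is rational and $C_2$-cofinite, then the span of $\{Z_{M^i}(v,\tau)\}$ affords a representation of $\Gamma$ \cite{Z}. So the entire content lies in the converse, and I would concentrate on proving: if $U$ is a simple, $C_2$-cofinite \voa and $\mathcal{W}_0:=\mathrm{span}\{Z_{M^i}(v,\tau)\mid i=0,\dots,p\}$ affords a representation of $\Gamma$, then $U$ is rational.

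First I would reduce rationality to the absence of logarithmic modules. Since $U$ is $C_2$-cofinite, Zhu's algebra $A(U)$ is finite dimensional and there are only finitely many irreducibles $M^0,\dots,M^p$; moreover $U$ is rational if and only if $U$ has no logarithmic modules, that is, $L(0)$ acts semisimply on every finitely generated admissible $U$-module, the algebraic shadow of which is semisimplicity of the (higher) Zhu algebras. Thus it suffices to show that $\Gamma$-invariance of $\mathcal{W}_0$ forces every $L(0)$-action to be semisimple. Next I would bring in the modular invariance theorem for $C_2$-cofinite \voas in the possibly non-rational case (Miyamoto): there is a finite-dimensional space $\mathcal{W}_{\mathrm{full}}$, spanned by the ordinary trace functions $Z_{M^i}(v,\tau)$ together with pseudo-trace functions attached to the indecomposable projective generalized modules and symmetric linear functionals on their endomorphism algebras, and $\mathcal{W}_{\mathrm{full}}$ is invariant under $\Gamma$. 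Two features are decisive: $\mathcal{W}_0\subseteq\mathcal{W}_{\mathrm{full}}$; and a pseudo-trace function carries a nonzero polynomial-in-$\tau$ coefficient exactly when the nilpotent part of $L(0)$ on the corresponding projective module is nonzero, that is, exactly in the non-semisimple case. Consequently $\dim\mathcal{W}_{\mathrm{full}}=\dim\mathcal{W}_0$ if and only if $A(U)$ is semisimple, and when it is not semisimple the elements of $\mathcal{W}_{\mathrm{full}}$ outside $\mathcal{W}_0$ are genuinely not $q$-series but involve $\tau$ polynomially.

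The core step, and the main obstacle, is then the following. Suppose toward a contradiction that $U$ is not rational, so some indecomposable projective $P$ has $L(0)|_P$ with a nontrivial Jordan block. I would analyse the $\Gamma$-action on $\mathcal{W}_{\mathrm{full}}$ and show that its unipotent radical---the $\tau\leftrightarrow\log q$ mixing produced by these Jordan blocks---acts nontrivially already on the image of the ordinary characters under $S:\tau\mapsto-1/\tau$. Concretely, one wants to show that $Z_{M^{i_0}}(v,-1/\tau)$ has a nonzero polynomial-in-$\tau$ component for some $i_0$; such a component cannot lie in $\mathcal{W}_0$, whose members are honest $q$-series, contradicting the assumed $\Gamma$-invariance of $\mathcal{W}_0$, and hence forcing $U$ to be rational.

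The difficulty is precisely to rule out an accidental decoupling in which the pseudo-trace functions split off as a $\Gamma$-complement and leave $\mathcal{W}_0$ invariant despite non-rationality. Ruling this out requires controlling the genus-one pairing between the projective modules and the vacuum block in the construction of $\mathcal{W}_{\mathrm{full}}$, and showing the $\Gamma$-representation on $\mathcal{W}_{\mathrm{full}}$ is indecomposable enough that no such $S$-stable complement exists. Known computations for the triplet $\mathcal{W}(p)$ algebras---where the irreducible characters provably fail to close under $SL(2,\Z)$ and $\tau$-linear pseudo-characters are forced---confirm the mechanism, but a uniform argument appears to need new input on the structure of genus-one conformal blocks for non-rational $C_2$-cofinite \voas.
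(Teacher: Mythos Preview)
The statement you are attempting to prove is labeled \textbf{Conjecture} in the paper, not Theorem or Proposition; the authors offer no proof and present it explicitly as an open problem (``The following conjecture is a converse of Zhu's result''). There is therefore no paper proof to compare your proposal against.

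As for the content of your proposal: the forward direction is indeed Zhu's theorem, and your strategy for the converse---invoking Miyamoto's pseudo-trace extension of modular invariance for $C_2$-cofinite but possibly non-rational vertex operator algebras, and arguing that the presence of genuine logarithmic terms obstructs $\Gamma$-closure of the ordinary trace functions---is the natural line of attack and is consistent with what is known in examples such as the triplet algebras. However, your own final paragraph correctly identifies the gap: you have not shown that the $\Gamma$-representation on $\mathcal{W}_{\mathrm{full}}$ cannot split so that $\mathcal{W}_0$ is already a subrepresentation even when pseudo-traces are present. Ruling out such a $\Gamma$-stable complement is exactly the missing ingredient, and you concede that ``a uniform argument appears to need new input.'' So what you have written is a plausible outline of a program, not a proof; the conjecture remains open, which is precisely why the paper records it as such.
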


\section{Holomorphic orbifolds}

Recall that vertex operator algebra $V$ is called holomorphic if it is rational and has a unique irreducible module,  namely $V$ itself.   Let $V$ be a $C_2$-cofinite, holomorphic vertex operator algebra of CFT type and $G$ a finite automorphism group of $V.$
Then for each $g\in G$ there is a unique irreducible $g$-twisted $V$-module $V(g)$ by Theorem \ref{minvariance}. For short
we write $Z(v,(g,h),\tau)$ for  $Z_{V(g)}(v,(g,h),\tau)$ for $(g,h)\in P(G)$  and $Z(g,h,\tau)$ for $Z(\1,(g,h),\tau).$

\begin{prop} The following hold:

(1) For any $k\in G,$ $Z(v,(kgk^{-1},khk^{-1}),\tau)=Z(v,(g,h),\tau).$

(2) For any $\gamma=\left(\begin{array}{cc}a & b\\ c & d\end{array}\right)\in \Gamma,$  and $(g,h)\in P(G)$ and $v\in V^G_{[n]}$
$$Z(v,(g,h),\gamma\tau)=(c\tau+d)^{{\rm wt}[v]} \gamma_{(g,h),(g^ah^c,g^bh^d)} Z(v,(g^ah^c, g^bh^d), \tau).$$

(3)  $Z(v,(g,h),\tau)$ is a modular form of weight $n$ on the congruence subgroup $\Gamma(m)$ where  $(g,h)\in P(G),$
$v\in V^G_{[n]},$ and  $m$ is given in Theorem \ref{main}. 

(4) The coefficient of each power of $q$ in  $Z(g,h),\tau)$ defines a projective character of $C_G(g).$ 

In particular, (1)-(4) hold for $Z(g,h,\tau).$ 
\end{prop}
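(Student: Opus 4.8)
The plan is to establish (1)--(4) by combining the modular invariance from Theorem~\ref{minvariance}, the congruence subgroup property from Theorem~\ref{main} and Lemma~\ref{lem}, and the representation-theoretic bookkeeping already used in the proof of Corollary~\ref{coro}. The only new input beyond those results is the uniqueness of $V(g)$ in the holomorphic case: for each $g\in G$ there is precisely one irreducible $g$-twisted $V$-module, so $\mathscr{M}(g)=\{V(g)\}$, and consequently $\mathscr{M}(g,h)=\{V(g)\}$ for every $h$ commuting with $g$ (indeed $h\circ V(g)$ is a $g$-twisted module, hence isomorphic to $V(g)$, so $V(g)$ is automatically $h$-stable). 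This is what makes all the sums over $N\in\mathscr{M}(\cdot,\cdot)$ collapse to a single term.

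For (1): since $V(g)$ is $g$-twisted, $V(g)\circ k$ is a $kgk^{-1}$-twisted module, so by uniqueness $V(g)\circ k\cong V(kgk^{-1})$. Conjugating the defining relation $\varphi(h)Y_{V(g)}(v,z)\varphi(h)^{-1}=Y_{V(g)}(hv,z)$ through this isomorphism shows that the operator implementing $khk^{-1}$ on $V(kgk^{-1})$ is (up to scalar) the transport of $\varphi(h)$; taking traces and using $o(v)$ invariance gives $Z(v,(kgk^{-1},khk^{-1}),\tau)=Z(v,(g,h),\tau)$ up to the scalar ambiguity, and one fixes scalars consistently as in \cite{DLM4}. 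For (2): this is just Theorem~\ref{minvariance}(1) specialized to the holomorphic case, where the sum over $N\in\mathscr{M}(g^ah^c,g^bh^d)$ has the single term $N=V(g^ah^c)$, so $Z(v,(g,h),\gamma\tau)=(c\tau+d)^{\wt[v]}\gamma_{(g,h),(g^ah^c,g^bh^d)}Z(v,(g^ah^c,g^bh^d),\tau)$. For (3): apply Lemma~\ref{lem} to $M=V(g)\in\mathscr{M}(g,h)$, which gives that $Z(v,(g,h),\tau)$ is a modular form of weight $n$ on $\Gamma(m_{g,h})$, and then use $\Gamma(m)\subseteq\Gamma(m_{g,h})$ since $m_{g,h}\mid m$.

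For (4): restrict attention to the subgroup $H=\langle g\rangle$ or more naturally to $C_G(g)$; the assignment $h\mapsto\varphi(h)$ for $h\in C_G(g)$ gives a projective representation of $C_G(g)$ on $V(g)$ with some 2-cocycle $\alpha\in H^2(C_G(g),S^1)$, exactly as in the discussion preceding Corollary~\ref{coro} (here $G_{V(g)}=C_G(g)$ because every $h$ centralizing $g$ stabilizes $V(g)$). Writing $Z(g,h,\tau)=\tr_{V(g)}\varphi(h)q^{L(0)-c/24}=\sum_{n}\bigl(\tr_{V(g)_{\lambda+n}}\varphi(h)\bigr)q^{\lambda-c/24+n}$, the coefficient of each power of $q$ is the character of the finite-dimensional projective $C_G(g)$-representation on the graded piece $V(g)_{\lambda+n}$, which is by definition a projective character of $C_G(g)$ (with cocycle $\alpha$). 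The final sentence follows by taking $v=\1$, for which $o(\1)=\mathrm{id}$ and $\wt[\1]=0$, so $\chi$ in (1)--(4) specializes directly to $Z(g,h,\tau)$.

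\textbf{Main obstacle.} The genuinely delicate point is the scalar normalization in (1) and (2): $\varphi(h)$, and hence each $Z(v,(g,h),\tau)$, is defined only up to a nonzero scalar, so statements like ``$Z(v,(kgk^{-1},khk^{-1}),\tau)=Z(v,(g,h),\tau)$'' and the precise values of the constants $\gamma_{(g,h),(g^ah^c,g^bh^d)}$ require a coherent choice of the $\varphi(h)$ across all twisted modules and all pairs. I expect to handle this by invoking the normalization already fixed in \cite{DLM4} (which is exactly the setting of Theorem~\ref{minvariance}), rather than re-deriving it; everything else is a routine transcription of the abelian/holomorphic special case of results proved above.
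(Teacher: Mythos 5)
Your proposal is correct and takes essentially the same route as the paper, which simply observes that (3) is a special case of Theorem \ref{main} and that (1), (2), (4) were already established in \cite{DLM4}; your extra detail (the collapse of $\mathscr{M}(g,h)$ to the single element $V(g)$, and the projective $C_G(g)$-action on the graded pieces of $V(g)$) is exactly the specialization implicit in that citation. One small slip: with the paper's convention $M\circ h$ is $h^{-1}gh$-twisted, so $V(g)\circ k\cong V(k^{-1}gk)$ rather than $V(kgk^{-1})$, but this is immaterial to the argument since you defer the scalar normalization to \cite{DLM4} anyway.
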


\begin{proof}  Part (3)  is a special case of Theorem \ref{main}. The rest was given in \cite{DLM4}. 
\end{proof}

In the case that $V$ is holomorphic and $h$ is a power of $g,$ the modularity of $Z(g,h,\tau)$ was obtained previously
in \cite{EMS}.

We know from \cite{D}, \cite{DGH} that  moonshine vertex operator algebra $V^{\natural}$ \cite{FLM}  is a $C_2$-cofinite,
holomorphic vertex operator algebra of CFT type.  Moreover, the automorphism group of $V^{\natural}$ is the monster
simple group \cite{FLM}.  In this case,  the functions $Z(g,h,\tau)$ are the candidates in the generalized moonshine conjecture \cite{N}, \cite{DLM4}. 
Parts (1), (2), (4)  were proved to hold in \cite{DLM4}. What new here is that each generalized McKay-Thompson series
$Z(g,h, \tau)$ is a modular function over a congruence subgroup $\Gamma(m).$  So unfinished business in proving the generalized moonshine conjecture is to extend the group $\Gamma(m)$ to a genus zero subgroup of $SL(2,\R).$

\end{document}